\newcommand{\ex}{{\rm ex}}
\pgfplotsset{compat=1.17}
\newtheorem{theorem}{Theorem}
\newtheorem{lemma}[theorem]{Lemma}
\newtheorem{conjecture}[theorem]{Conjecture}
\newtheorem{remark}[theorem]{Remark}
\theoremstyle{definition}
\newtheorem{definition}[theorem]{Definition}
\tikzset{middlearrow/.style={
		decoration={markings,
			mark= at position 0.5 with {\arrow[scale=2]{#1}} ,
		},
		postaction={decorate}
	}
}
\tikzset{midarrow/.style={
		decoration={markings,
			mark= at position 0.3 with {\arrow[scale=1.5]{#1}} ,
		},
		postaction={decorate}
	}
}
\begin{document}
	
	\title{Tur\'{a}n problems for $k$-geodetic digraphs}
	
	\author{James Tuite}
	\address{School of Mathematics and Statistics, Open University, Walton Hall, Milton Keynes, UK}
	\email{james.tuite@open.ac.uk}
	
	\author{Grahame Erskine}	
	\thanks{Corresponding author: Grahame Erskine}
	\address{School of Mathematics and Statistics, Open University, Walton Hall, Milton Keynes, UK}
	\email{grahame.erskine@open.ac.uk}
	
	\author{Nika Salia}
	\address{Alfr\'ed R\'enyi Institute of Mathematics, Budapest, Hungary}
	\email{nikasalia@yahoo.com}

	\begin{abstract}
		A digraph $G$ is \emph{$k$-geodetic} if for any pair of (not necessarily distinct) vertices $u,v \in V(G)$ there is at most one walk of length $\leq k$ from $u$ to $v$ in $G$. In this paper we determine the largest possible size of a $k$-geodetic digraph with given order. We then consider the more difficult problem of the largest size of a strongly-connected $k$-geodetic digraph with given order, solving this problem for $k = 2$ and giving a construction which we conjecture to be extremal for larger $k$. We close with some results on generalised Tur\'{a}n problems for the number of directed cycles and paths in $k$-geodetic digraphs.
		
	\end{abstract}
	
	\keywords{Digraph; Tur\'{a}n; Extremal; $k$-geodetic; Strong-connectivity}
	\subjclass[2020] {Primary:  05C35; 05C20}
	
	\maketitle
	
	
	\section{Introduction}
	
	Tur\'{a}n problems are a fundamental part of extremal combinatorics.  Such a problem typically asks for the largest possible size of a graph $G$ with a family $\mathcal{F}$ of forbidden subgraphs. When $\mathcal{F}$ consists of small cycles, this is equivalent to the girth problem. Erd\H{o}s conjectured in 1975 that the largest possible size of a graph with order $n$ and girth $\geq 5$ is given by $(\frac{1}{2}+o(1))^{3/2}n^{3/2}$~\cite{Erd}; this conjecture remains open. Lazebnik and Ustimenko gave a construction of dense graphs with arbitrarily large girth in~\cite{LazUst}; the latest computational results for the problem can be found in~\cite{Afzaly}. 
	
	It is natural to extend this problem to directed graphs by asking for the largest possible size of a strongly connected digraph with order $n$ and no directed cycles of length $\leq g$. This problem was solved by Bermond et al. in~\cite{BerGerHeySot}. For the degree-restricted case, the well-known Caccetta-H\"aggkvist Conjecture states that the girth of a digraph with order $n$ and minimum out-degree $r$ is at most $\lceil \frac{n}{r}\rceil $.
	
	\begin{theorem}[\cite{BerGerHeySot}]
		Let $D$ be a strong digraph of order $n$, size $m$ and girth $g$.  Let $k \geq 2$. Then if
		\[ m \geq \frac{1}{2}(n^2+(3-2k)n+k^2-k), \]
		we must have $g \leq k$. This bound is tight.	
	\end{theorem}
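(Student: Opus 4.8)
The plan is to prove the contrapositive: I would show that if a strong digraph $D$ of order $n$ has girth $g \geq k+1$, then its size satisfies $m < \frac{1}{2}(n^2+(3-2k)n+k^2-k)$, i.e. $m \leq \frac{1}{2}(n^2+(3-2k)n+k^2-k) - 1$. The hypothesis of girth at least $k+1$ means that for each vertex $u$, the out-neighbourhood $N^+(u)$ together with the vertices reachable in few steps is constrained: in particular there can be no short directed cycles through $u$. The natural strategy is to bound the out-degree (or in-degree) at each vertex using the girth condition and the fact that strong connectivity forces every vertex to lie on a cycle, then sum these local bounds over all vertices.

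First I would fix a shortest directed cycle $C = x_0 x_1 \cdots x_{g-1} x_0$ of length $g \geq k+1$, which exists because $D$ is strong and hence has finite girth. The key local observation is that because $C$ is a \emph{shortest} cycle, there can be no ``chords'' that create a shorter cycle: an arc $x_i \to x_j$ can exist only when going from $x_j$ around to $x_i$ along $C$ costs at least $g-1$ steps, which pins down exactly which backward/forward arcs among the cycle vertices are permitted. More generally, for any vertex $v$ and any vertex $x_i$ on the cycle, the distances $d(v, x_i)$ and $d(x_i, v)$ must satisfy $d(v,x_i) + d(x_i, x_j) + d(x_j, v) \geq g$ type inequalities whenever a cycle through $v$ is formed, so that $v$ cannot have arcs to too many consecutive cycle vertices. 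I would make this precise by counting, for each vertex $v$ outside $C$, how many arcs can run between $v$ and $C$ without creating a cycle shorter than $g$; the girth bound should force that $v$ sends and receives only a limited number of arcs to/from any stretch of $g-1$ consecutive vertices of $C$.

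The arithmetic target $\frac{1}{2}(n^2+(3-2k)n+k^2-k)$ rewrites suggestively: with $g = k+1$ one checks that it equals $\binom{n}{2} - \binom{g-1}{2} + (n - g + 1)$ or a similar closed form combining the maximum possible arc count $n(n-1)$ (a tournament-like count, halved appropriately) minus the arcs forbidden by the girth constraint. So the main combinatorial content is to show that the number of \emph{forbidden} ordered pairs — pairs $(u,v)$ for which the arc $u \to v$ cannot be present without producing a cycle of length $\leq k$ — is at least $\binom{g-1}{2}$-ish, matching the deficit. I would organise this as: total possible arcs is $n(n-1)$; then subtract the arcs ruled out along the shortest cycle and its interaction with the rest of the graph, arguing that a shortest cycle of length $g$ ``blocks'' a quadratic-in-$g$ number of potential arcs.

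The hard part will be turning the local non-chord observations into a clean global count that is tight, because strong connectivity is used in two competing ways: it guarantees the existence of the cycle (giving us structure to exploit) but it also forces \emph{more} arcs to exist (every vertex must be reachable), so one must be careful not to overcount forbidden arcs in a way that contradicts strong connectivity. Establishing that the bound is \emph{best possible} is a separate task requiring an explicit extremal construction — presumably a family of strong digraphs of girth exactly $k+1$ attaining equality — and I would exhibit this by taking a directed cycle of length $k+1$ and adding all arcs that do not shorten the girth, verifying by direct count that the resulting size equals $\frac{1}{2}(n^2+(3-2k)n+k^2-k) - 1 + 1$ at the threshold, i.e. that adding one more arc necessarily drops the girth to at most $k$.
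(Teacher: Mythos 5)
First, a point of comparison: the paper does not prove this statement at all --- it is quoted verbatim from Bermond, Germa, Heydemann and Sotteau~\cite{BerGerHeySot}, so there is no in-paper proof to match, and your attempt must stand on its own against the original argument (which proceeds by induction on $n$, deleting or bypassing vertices along a shortest cycle, not by a one-shot count around a fixed cycle).

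Your proposal has two concrete gaps. The first is quantitative: the threshold rewrites as $\frac{1}{2}\bigl(n^2+(3-2k)n+k^2-k\bigr)=\binom{n-k+1}{2}+n$, not as your suggested $\binom{n}{2}-\binom{g-1}{2}+(n-g+1)$; the two differ by roughly $(k-1)n$, which is exactly why your plan fails. Since girth $\geq k+1\geq 3$ already forbids digons, the relevant deficit is measured below $\binom{n}{2}$, and it equals $(k-2)n-\binom{k}{2}+1$ --- \emph{linear in $n$} for fixed $k$, not ``quadratic in $g$'' as you claim. Counting only chords of one shortest cycle $C$ and arcs between $V(G)\setminus C$ and $C$ can never account for a deficit growing with $n$: the girth hypothesis forbids the arc $u\to v$ whenever $d(v,u)\leq k-1$ for \emph{every} pair $(u,v)$, and the tight bound requires exploiting this globally (in the original proof, via induction on $n$ with strong connectivity carefully preserved at each step --- precisely the tension you flag but do not resolve). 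The second gap is the extremal construction: a directed cycle of length $k+1$ has only $k+1$ vertices, and ``adding all arcs that do not shorten the girth'' is a greedy, order-dependent prescription, not a well-defined digraph. The correct witness is a Hamiltonian cycle $v_1\to v_2\to\cdots\to v_n\to v_1$ together with all ``backward'' arcs $v_i\to v_j$ with $i-j\geq k$; any cycle using $b\geq 1$ backward arcs needs at least $kb$ forward arcs, giving length $\geq(k+1)b$, so the girth is exactly $k+1$, the digraph is strong, and the count is $n+\binom{n-k+1}{2}-1$, one below the stated threshold --- which is the verification your last paragraph gestures at but does not carry out.
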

	In this paper we consider an analogous problem using a different `girth-like' parameter. A digraph $G$ is \emph{$k$-geodetic} if for any pair of not necessarily distinct vertices $u,v$ there is at most one walk in $G$ from $u$ to $v$ with length $\leq k$. The \emph{geodetic girth} of $G$ is the largest value of $k$ such that $G$ is $k$-geodetic, when this value is defined (a bipartite graph has $k$-geodetic orientations for arbitrarily large $k$). By way of motivation, observe that all orientations of a graph with girth $\geq 2k+1$ are $k$-geodetic. The geodetic girth is of interest in a directed analogue of the degree/girth problem~\cite{Tui}. A related problem was considered in the papers~\cite{HuaLyu,HuaLyuQiao,Lyu,Wu}, which, for fixed $k \geq 2$, determine the largest size of a digraph with given order such that for each pair of vertices $u,v$ there is at most one $u,v$-walk with length exactly $k$. 
	
	In the papers~\cite{ShaUst,UstKoz}, Ustimenko et al. prove that if $f(n,k)$ is the largest size of a diregular $k$-geodetic digraph with order $n$, then for fixed $k$ we have $f(n,k) \sim n^{\frac{k+1}{k}}$. They also give a family of digraphs, now known as the \emph{permutation digraphs}, which meet this asymptotic bound. These digraphs were introduced in~\cite{Fio} and some further properties of these digraphs are given in~\cite{BruFioFio}. For $d,k \geq 2$ the permutation digraph $P(d,k)$ is defined as follows.  The vertices of $P(d,k)$ are all permutations $x_0x_1\dots x_{k-1}$ of length $k$ of symbols from the set $[d+k] = \{ 0, 1, \dots, d+k-1\} $.  A vertex $x_0x_1\dots x_{k-1}$ has an arc to all permutations of the form $x_1x_2\dots x_{k-1}x_k$ for any $x_k \not \in \{ x_0,x_1,\dots, x_{k-1}\}$. It is simple to verify that $P(d,k)$ is diregular with out-degree $d$ and is $k$-geodetic.  The order of $P(d,k)$ is $n = (d+k)(d+k-1)\cdots (d+1)$ and has size $m = nd \sim n^{\frac{k+1}{k}}$. We will see in the final section that these digraphs have other interesting extremal properties. The result of Ustimenko et al. also holds in the more general setting of out-regular digraphs as we can see from the following short argument.
	
	\begin{remark}
		For $k \geq 2$ the largest size $\ex_{out}(n,k)$ of an out-regular $k$-geodetic digraph with order $n$ satisfies
		$\ex_{out}(n;k) \sim n^{\frac{k+1}{k}}$ as $n \rightarrow \infty $.
	\end{remark}
	\begin{proof}
		It is known that the order $n$ of a $k$-geodetic digraph with minimum out-degree $d$ is bounded below by the directed \emph{Moore bound} $M(d,k) = 1 + d + d^2 + \dots + d^k$ (see~\cite{MilSir}).  Hence $n \geq d^k$ and, rearranging, $d \leq n^{1/k}$.  The size $m$ of an out-regular $k$-geodetic digraph $G$ with order $n$ thus satisfies $m = nd \leq n^{\frac{k+1}{k}}$.
	\end{proof}             
	In this paper we consider this problem without the restriction of diregularity. This problem can be put into the form of a forbidden subgraph problem, as every violation of $k$-geodecity in $G$ can be identified with the occurrence of a specific subdigraph of $G$; in~\cite{UstKoz} these subdigraphs are referred to as `hooves' or `commutative diagrams'. In Section~\ref{section:largest size} we find the largest size of a $k$-geodetic digraph with given order $n$ and classify the extremal digraphs. We then discuss the more difficult problem of finding the largest size of a strongly connected $k$-geodetic digraph; we solve this problem for $k = 2$ and give constructions that we conjecture to be extremal for larger $k$ in Section~\ref{section:largest size}. We classify the extremal digraphs for $k = 2$ in Section~\ref{section:classification}. Finally in Section~\ref{section:generalised Turan} we study some generalised Tur\'{a}n problems for $k$-geodetic digraphs.     
	
	A few words concerning notation. If there is an arc in a digraph $G$ from $u$ to $v$, then we write $u \rightarrow v$. The distance $d(u,v)$ between vertices $u,v$ of a digraph $G$ is the length of a shortest directed path from $u$ to $v$ in $G$ (or $\infty $ if no such path exists); note that we may have $d(u,v) \not = d(v,u)$ in a digraph. For a vertex $u$ of $G$ the \emph{out-neighbourhood} $N^+(u)$ of $u$ is defined to be $\{ v \in V(G): u \rightarrow v\}$; similarly the \emph{in-neighbourhood} of $u$ is $N^-(u) = \{ v \in V(G) : v \rightarrow u\} $. For a vertex $u$ of $G$ we define $N^{+k}(u):=\left\{v \in V(G): d(u,v)=k \right\}$ and similarly $N^{-k}(u):=\left\{v \in V(G): d(v,u)=k \right\}$.  The \emph{out-degree} $d^+(u)$ of a vertex $u$ of $G$ is the number of out-neighbours of $u$, i.e. $d^+(u) = |N^+(u)|$, and the \emph{in-degree} of $u$ is $d^-(u) = |N^-(u)|$. A vertex with out-degree zero is a \emph{sink} and a vertex with in-degree zero is a \emph{source}. For any other graph-theoretical terminology not defined here we follow~\cite{BonMur}.

	\section{The largest size of a $k$-geodetic digraph}\label{section:largest size}
	In this section we first classify the $2$-geodetic digraphs with given order and largest size without the assumption of diregularity; the corresponding result for larger $k$ follows immediately. We then solve the more difficult problem of the largest size of strongly connected $2$-geodetic digraphs with given order and give upper and lower bounds for the extremal size for larger $k$.
	
	\begin{theorem}\label{basic theorem}
		For $n \geq 4, k \geq 2$, the largest size of a $k$-geodetic digraph with order $n$ is $\left \lfloor \frac{n^2}{4} \right\rfloor $. 
	\end{theorem}
	\begin{proof}
		Orienting all edges of the complete bipartite graph $K_{\left \lceil n/2 \right \rceil , \left \lfloor n/2 \right \rfloor }$ in the same direction yields a $k$-geodetic digraph; this gives the required lower bound. Equality is easily checked for $4 \leq n \leq 6$.
		
		For the upper bound, consider first the graph $K_4^-$ consisting of the complete graph $K_4$ with one edge deleted. We claim that no graph containing a copy of $K_4^-$ has a $k$-geodetic orientation for $k \geq 2$. Suppose for a contradiction that a graph $G$ contains two triangles $x,y,z$ and $x,y,z'$, where $z \not = z'$.  The only 2-geodetic orientation of a triangle is a directed 3-cycle, so we can assume that $x \rightarrow y \rightarrow z \rightarrow x$ and $x \rightarrow y  \rightarrow z' \rightarrow x$; however there are now two distinct directed paths from $y$ to $x$ of length two, violating 2-geodecity. A simple inductive argument shows the well-known result~\cite{Dir,Erd4} that for $n \geq 7$, any graph with order $n$ and size $> \left \lfloor \frac{n^2}{4} \right \rfloor$ contains a copy of $K_4^-$ and the unique $K_4^-$-free graph with size $\left \lfloor \frac{n^2}{4} \right \rfloor$ is $K_{\lceil \frac{n}{2} \rceil ,\lfloor \frac{n}{2} \rfloor }$. As a $k$-geodetic digraph with $k \geq 3$ is also $2$-geodetic, the same upper bound applies for larger $k$. We therefore have equality for $k \geq 2$ and $n \geq 4$. 
	\end{proof} 
	
	We now classify the $2$-geodetic digraphs that meet the bound in Theorem~\ref{basic theorem}.  
	
	\begin{lemma}\label{classificationlemma}
		Let $K$ be a 2-geodetic orientation of a complete bipartite graph $K_{s,t}$ with partite sets $X$ and $Y$, where $s \geq t \geq 2$. If $x$ is any vertex of $K$ that is neither a source nor a sink, then either $d^+(x) = 1$ or $d^-(x) = 1$.
	\end{lemma}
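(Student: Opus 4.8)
The plan is to argue by contradiction: suppose $x \in X$ is neither a source nor a sink, yet $d^+(x) \geq 2$ and $d^-(x) \geq 2$. Since $x$ is neither a source nor a sink, it has at least one in-neighbour and at least one out-neighbour in $Y$; the assumption gives at least two of each, so I can pick distinct vertices $y_1, y_2 \in N^+(x) \cap Y$ and distinct $y_1', y_2' \in N^-(x) \cap Y$, all lying in $Y$. The key observation is that every vertex of $Y$ is adjacent to \emph{every} vertex of $X$ in the underlying complete bipartite graph, so each $y_i$ and each $y_j'$ has a prescribed orientation on all its edges to $X$, and I want to exhibit two distinct directed walks of length at most $2$ between a common pair of vertices.

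First I would focus on a single out-neighbour $y \in N^+(x)$ and a single in-neighbour $y' \in N^-(x)$, so that $x \rightarrow y$ and $y' \rightarrow x$. Now consider how $y$ and $y'$ are joined to the rest of $X$. Because $s \geq 2$, there is at least one further vertex $x' \in X$ with $x' \neq x$, and both $y$ and $y'$ are adjacent to $x'$. The heart of the argument is a case analysis on the orientations of the edges between $\{y, y'\}$ and the second vertices of $X$, combined with Lemma~\ref{diamondfree} (which already rules out $K_4^-$, hence forbids two vertices of $X$ having two common out-neighbours or two common in-neighbours in the oriented sense that creates a diamond). The cleanest route: if $d^+(x) \geq 2$ with $y_1, y_2 \in N^+(x)$, then for any other $x' \in X$, the pairs of edges $x'y_1$ and $x'y_2$ cannot both be oriented into $x'$ and cannot both be oriented out of $x'$ without creating a second $x,x'$-walk (length $2$ through $y_1$ or $y_2$) or a second $x',x$-walk, which violates $2$-geodecity.

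Concretely, I expect the decisive step to be this. Take $y_1, y_2 \in N^+(x)$ and $y' \in N^-(x)$ (using $d^+(x) \geq 2$ and $d^-(x) \geq 1$). Since $y' \rightarrow x \rightarrow y_1$ and $y' \rightarrow x \rightarrow y_2$ are the unique short walks so far, I look at the edges $y' y_1$-through-$X$ and track where second paths can form. The most economical contradiction is to find a vertex $w$ reached by two distinct walks of length $2$ from a common source: if both $y_1 \rightarrow x$-type reversals fail, one shows that some $x' \in X \setminus \{x\}$ together with $x$ sends two length-$2$ walks to a common $y_i$, or receives two from a common $y_j'$, directly contradicting $k$-geodecity with $k = 2$.

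The main obstacle will be organising the case analysis so that every combination of orientations on the edges from $\{y_1, y_2\}$ (and the in-neighbours) to the auxiliary vertex $x' \in X$ is covered without omission, while invoking $s \geq t \geq 2$ at exactly the points where a second vertex of $X$ or $Y$ is needed. I anticipate that the symmetry between the $d^+(x) \geq 2$ and $d^-(x) \geq 2$ hypotheses lets me reduce the work: after establishing that $d^+(x) \geq 2$ forces a forbidden configuration, the statement for $d^-(x) \geq 2$ follows by passing to the converse digraph, which is again a $2$-geodetic orientation of the same complete bipartite graph and swaps the roles of in- and out-degrees. Thus I would prove one direction carefully and then remark that the converse digraph handles the other, concluding that at least one of $d^+(x), d^-(x)$ must equal $1$.
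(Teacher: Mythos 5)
Your overall framework---assume $d^+(x)\geq 2$ and $d^-(x)\geq 2$ and manufacture two length-$2$ paths between a common ordered pair via an auxiliary vertex $x'\in X$---is the same as the paper's, but the proposal has two genuine gaps. First, the step you call the ``cleanest route'' is half false: for $y_1,y_2\in N^+(x)$ and $x'\in X\setminus\{x\}$, it is true that $y_1\rightarrow x'$ and $y_2\rightarrow x'$ cannot both hold (that gives the two paths $x\rightarrow y_i\rightarrow x'$), but $x'\rightarrow y_1$ and $x'\rightarrow y_2$ together violate nothing: $y_1\neq y_2$, and since $x\rightarrow y_i$ the reversed arcs $y_i\rightarrow x$ do not exist, so no common endpoint arises. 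Hence the dichotomy you rely on does not hold, and a case analysis on orientations around a single $x'$ and the vertices $\{y_1,y_2,y'\}$ cannot close by itself. Second, your appeal to Lemma~\ref{diamondfree} is vacuous here: the underlying graph is complete bipartite, hence triangle-free and trivially $K_4^-$-free, so it supplies no constraint; everything must come from $2$-geodecity directly.

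What is missing is the structural dichotomy that actually drives the paper's proof: split on whether some out-neighbour $y$ of $x$ fails to be a sink. If $y\rightarrow x'$ for some $x'\in X\setminus\{x\}$, then your correct half-observation forces $x'$ to have arcs to \emph{all} of $N^+(x)\setminus\{y\}$; since every in-neighbour $y^-$ of $x$ already reaches all of $N^+(x)$ via $x$, $2$-geodecity then forces $x'\rightarrow y^-$ for every in-neighbour $y^-$ of $x$, and $d^-(x)\geq 2$ yields the two paths $x'\rightarrow y_1^-\rightarrow x$ and $x'\rightarrow y_2^-\rightarrow x$, a contradiction. In the remaining case every out-neighbour of $x$ is a sink and, by the symmetric (converse) argument you correctly anticipated, every in-neighbour is a source; then for $x^*\in X\setminus\{x\}$ (here $s\geq 2$ is used) completeness of the bipartite graph forces $y^-\rightarrow x^*$ and $x^*\rightarrow y^+$, giving the two paths $y^-\rightarrow x\rightarrow y^+$ and $y^-\rightarrow x^*\rightarrow y^+$. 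Your sketch never reaches this second case, and it is precisely where your plan would fail: when all of $N^+(x)$ are sinks and all of $N^-(x)$ are sources, no arcs leave $N^+(x)$ and none enter $N^-(x)$, so the edges you proposed to track carry no second walk, and the contradiction must instead be routed through a second vertex of $X$ using the sink/source structure.
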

	\begin{proof}
		Let $x \in X$ be a vertex of $K$ that is neither a source nor a sink.  Suppose that $d^+(x) \geq 2$ and $d^-(x) \geq 2$. Let $y \in Y$ be an out-neighbour of $x$ such that $y$ is not a sink. Hence $y \rightarrow x'$ for some $x' \in X-\{x\}$. If any other out-neighbour $y'$ of $x$ has an arc to $x'$, then we would have two 2-paths $x \rightarrow y \rightarrow x'$ and $x \rightarrow y' \rightarrow x'$, violating 2-geodecity, so it follows that $x'$ has arcs to every vertex of $N^+(x)-\{y\} $. Any in-neighbour $y^-$ of $x$ can already reach every vertex of $N^+(x)$ by a 2-path via $x$. As $x'$ has arcs to every vertex of $N^+(x)-\{ y\} $, it follows that $x' \rightarrow y^-$ for every in-neighbour $y^-$ of $x$. However, there are at least two such in-neighbours $y_1^-$ and $y_2^-$ by assumption, so there exist paths $x' \rightarrow y_1^- \rightarrow x$ and $x' \rightarrow y_2^- \rightarrow x$, a contradiction.
		
		It follows that every out-neighbour of $x$ in $Y$ is a sink and similarly every in-neighbour of $x$ is a source.  Let $x^* \in X-\{ x\} $. Then if $y^+ \in N^+(x), y^- \in N^-(x)$ we have two paths $y^- \rightarrow x \rightarrow y^+$ and $y^- \rightarrow x^* \rightarrow y^+$, which is impossible.  Hence we must have either $d^+(x) = 1$ or $d^-(x) = 1$.
	\end{proof}
	
	\begin{theorem}\label{completebipartitegraphs}
		For $n \geq 7$ the 2-geodetic digraphs with largest size are isomorphic to an orientation of $K_{\lceil \frac{n}{2} \rceil, \lfloor \frac{n}{2} \rfloor }$ with all arcs oriented in the same direction, except for a matching that is oriented in the opposite direction. The number of isomorphism classes of extremal digraphs is $n+1$ for odd $n \geq 7$ and $\frac{n}{2}+1$ for even $n \geq 8$.
	\end{theorem}
	\begin{proof}
		Let $K$ be a 2-geodetic orientation of $K_{\lceil \frac{n}{2}\rceil ,\lfloor \frac{n}{2} \rfloor }$; call the partite sets $X$ and $Y$, where $|X| \geq |Y|$. If $X$ contains a source then $Y$ contains no sources and vice versa, so we can assume that any source of $G$ lies in $X$. If $X$ contains only sources then we recover the construction in Theorem~\ref{basic theorem}, so we can assume that $X$ contains a vertex that is neither a source nor a sink. As $K$ is $2$-geodetic, $X$ cannot contain both sources and sinks; for example if $x_1 \in X$ is a source and $x_2 \in X$ is a sink, then if $y_1,y_2 \in Y$ we have paths $x_1 \rightarrow y_1 \rightarrow x_2$ and $x_1 \rightarrow y_2 \rightarrow x_2$, which is impossible. 
		
		Note that if all vertices are neither a source nor a sink then both partitions contain a vertex which is neither a source nor a sink. If $s=t=2$ then by Lemma~\ref{classificationlemma} we are done, so we may assume that $|Y|>2$ and $X$ contains a vertex, say $x_1$, which is neither a source nor a sink. By Lemma~\ref{classificationlemma} any such vertex has either in-degree or out-degree one; without loss of generality, we assume that $N^+(x_1) = \{ y_1\}$. 
		
		Suppose that $X$ contains a vertex $x'$ with $d^+(x') > 1$. If $x'$ has two out-neighbours $y',y'' \in Y-\{ y_1\} $, then $x' \rightarrow y' \rightarrow x_1$ and $x' \rightarrow y'' \rightarrow x_1$ would be two distinct $x',x_1$-paths of length two. Hence we can assume that $N^+(x')= { y_1,y} $ for some $y \in Y-\{y_1\} $. Therefore for any $y' \in Y-\{ y_1,y\} $ we have $y' \rightarrow x_1$ and $y' \rightarrow x'$, so that $y'$ has two paths of length two to $y_1$, a contradiction. Applying Lemma~\ref{classificationlemma}, we have the desired result. 
	\end{proof}

	
	For even $n$, if the matching mentioned in Theorem~\ref{completebipartitegraphs} is chosen to be a perfect matching, then the resulting digraph is strongly connected; however, all of the extremal digraphs in Theorem~\ref{completebipartitegraphs} for odd $n$ contain either a source or a sink. It is therefore natural to ask for the largest size of a strongly-connected $k$-geodetic digraph with given order. 
	
	\begin{definition}
		For $n \geq k+1$ and $k \geq 2$, $\ex(n;k)$ is the largest possible size of a strongly-connected $k$-geodetic digraph with order $n$.
	\end{definition}
	
	We now determine the exact value of $\ex(2r+1;2)$; we classify the extremal digraphs for odd $n$ in Section~\ref{section:classification}.  Taking a strongly connected 2-geodetic digraph with order $2r$ and size $r^2$ and expanding one arc into a directed triangle shows that $\ex(2r+1;2) \geq r^2+2$ (this construction is shown in Figure~\ref{fig:largeconstruction}).  We now show that this lower bound is optimal and that any $2$-geodetic digraph with odd order and larger size contains either a source or a sink.
	
	\begin{figure}\centering
		\begin{tikzpicture}[midarrow=stealth,x=0.2mm,y=-0.2mm,inner sep=0.1mm,scale=1.5,
			thick,vertex/.style={circle,draw,minimum size=10,font=\tiny,fill=white},edge label/.style={fill=white}]
			\tiny
			\node at (-50,-50) [vertex] (z) {};	
			\node at (0,0) [vertex] (u0) {};
			\node at (0,-100) [vertex] (v0) {};
			\node at (50,0) [vertex] (u1) {};
			\node at (50,-100) [vertex] (v1) {};	
			\node at (100,0) [vertex] (u2) {};
			\node at (100,-100) [vertex] (v2) {};	
			\node at (150,0) [vertex] (u3) {};
			\node at (150,-100) [vertex] (v3) {};		
			
			\path
			(v0) edge [midarrow,ultra thick] (z)
			(z) edge [midarrow,ultra thick] (u0)
			(u0) edge [midarrow,ultra thick] (v0)
			
			(u1) edge [midarrow] (v1)	
			(u2) edge [midarrow] (v2)
			(u3) edge [midarrow] (v3)
			(v0) edge [midarrow] (u1)
			(v0) edge [midarrow] (u2)
			(v0) edge [midarrow] (u3)
			(v1) edge [midarrow] (u0)
			(v1) edge [midarrow] (u2)
			(v1) edge [midarrow] (u3)	
			(v2) edge [midarrow] (u0)
			(v2) edge [midarrow] (u1)
			(v2) edge [midarrow] (u3)
			(v3) edge [midarrow] (u0)
			(v3) edge [midarrow] (u2)
			(v3) edge [midarrow] (u1)			
			;

		\end{tikzpicture}
		\caption{A strongly connected digraph with $n = 2r+1$ and $m = r^2+2$ for $r = 4$ with the triangle in bold}
		\label{fig:largeconstruction}
	\end{figure}
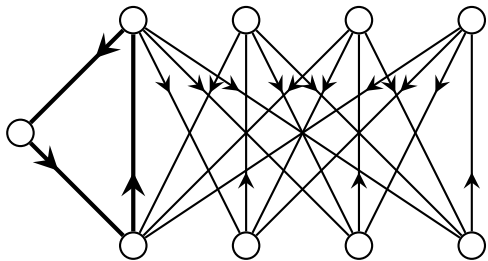

	\begin{theorem}\label{thm:2_Geo_Ext_Number}
		For all $r \geq 1$ we have $\ex(2r+1;k) = r^2+2$, and any 2-geodetic digraph with larger size contains a source or a sink.  For $r \geq 3$, the underlying graph of any 2-geodetic digraph $G$ with order $n = 2r+1$ and size $m = r^2+2$ that has no sources or sinks is isomorphic to a graph formed from a triangle $T$ and a copy of $K_{r-1,r-1}$ by joining every vertex in $K_{r-1,r-1}$ to exactly one vertex of $T$.  
	\end{theorem}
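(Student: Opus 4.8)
The plan is to establish all three assertions---the value $ex^*(2r+1;2)=r^2+2$, the source/sink statement, and the structure---through a single deletion reduction to the even case, whose answer $ex(2r;2)=r^2$ is already supplied by Theorem~\ref{n^2/4}. First I would record that $2$-geodecity forces girth at least $3$, so $G$ has no digons and is an orientation of its underlying graph with exactly $m$ edges, while the absence of a source or sink means $d^+(v),d^-(v)\ge 1$ for every $v$. The workhorse is the observation that, for any vertex $v$, the induced subdigraph $G-v$ is again $2$-geodetic on $2r$ vertices, so $|A(G-v)|\le ex(2r;2)=r^2$; hence $m\le r^2+(d^+(v)+d^-(v))$ for every $v$, and in particular $m\le r^2+\delta$, where $\delta=\min_v(d^+(v)+d^-(v))$ is the minimum total degree. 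Thus as soon as $G$ contains a vertex $z$ with $d^+(z)=d^-(z)=1$ we obtain the desired bound $m\le r^2+2$, matching the construction.

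Two structural facts feed into the equality analysis. First, every in- and out-degree is at most $r$: if $d^+(v)=d$, then the out-neighbours reach $\sum_{w\in N^+(v)}d^+(w)$ further vertices, all distinct from each other, from $v$, and from $N^+(v)$ by $2$-geodecity and girth, so $d+\sum_{w\in N^+(v)}d^+(w)\le n-1$; since $G$ has no sink each summand is at least $1$, giving $2d\le n-1=2r$. Secondly, $N^+(v)$ and $N^-(v)$ are independent sets of the underlying graph, because an arc $w\to w'$ inside $N^+(v)$ would produce the two walks $v\to w'$ and $v\to w\to w'$; this sharpens Lemma~\ref{diamondfree}.

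For the structure statement I would begin from a vertex $z$ with $d^+(z)=d^-(z)=1$, say $a\to z\to b$. If $m=r^2+2$ then $G-z$ has exactly $r^2$ arcs on $2r$ vertices, so it is extremal and, by Theorem~\ref{classification1}, an orientation of $K_{r,r}$ with parts $X,Y$ in which all arcs run one way apart from a reversed matching. Re-attaching $z$ can only repair a sink at $a$ and a source at $b$; since the unmatched vertices of $X$ and $Y$ are precisely the sinks and sources of $G-z$, the requirement that $G$ be sourceless and sinkless forces the matching to miss exactly one vertex in each part, namely $a\in Y$ and $b\in X$, with the default arc $b\to a$ present. Hence $\{z,a,b\}$ is a directed triangle $a\to z\to b\to a$, the remaining vertices induce $K_{r-1,r-1}$ on $(X\setminus b,\,Y\setminus a)$, and each of its vertices is adjacent to exactly one of $a,b$ and to neither $z$, which is the asserted graph. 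The perfect-matching subcase is settled by checking that any placement of $a,b$ avoiding a common short walk again produces this triangle.

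The main obstacle is the remaining regime $\delta\ge 3$, where the one-vertex deletion yields only the weak bound $m\le r^2+\delta$. Here I must instead show that a sourceless, sinkless $2$-geodetic digraph on $2r+1$ vertices with minimum total degree at least $3$ satisfies $m\le r^2+1$, so that no such digraph is extremal and any digraph with $m\ge r^2+3$ contains a source or a sink. The global inequality $m+\sum_v d^+(v)d^-(v)\le n(n-1)$, obtained by summing the per-vertex bound above, is far too lossy---it has large slack already at the extremal configuration---and so cannot by itself recover the linear saving of $r-2$ arcs. I expect to attack this case by combining a two-step deletion down to odd order $2r-1$, where $ex(2r-1;2)=r^2-r$ by Theorem~\ref{n^2/4}, with the rigidity of the extremal even subdigraph and the independence of in- and out-neighbourhoods, arguing that a vertex of total degree at least $3$ cannot be re-attached to an oriented $K_{r,r}$ without creating a forbidden pair of short walks, a source, or a sink. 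This dense case is where the genuine difficulty of the theorem lies.
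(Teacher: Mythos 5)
Your reduction handles only part of the problem, and the part you correctly flag as ``the genuine difficulty'' rests on a claim that is false. You propose to finish the dense regime by showing that a sourceless, sinkless $2$-geodetic digraph on $2r+1$ vertices with minimum total degree $\delta \geq 3$ has at most $r^2+1$ arcs. But the paper's own classification in Section~\ref{section:classification} exhibits extremal counterexamples: the digraphs $B_{r,t}$ with $1 \leq t \leq r-2$, as well as $C_r$ and $D_r$, are $2$-geodetic, have order $2r+1$ and size exactly $r^2+2$, have no sources or sinks, and have no vertex of total degree two (in $B_{r,t}$ the triangle vertices have degrees $r+1$, $t+2 \geq 3$ and $r-t+1 \geq 3$, and every vertex of the $K_{r-1,r-1}$ has degree $r$; in $C_r$ and $D_r$ the bad vertex $z$ has degree four). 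Note that the theorem's structure statement deliberately allows every vertex of the triangle $T$ to have neighbours in the $K_{r-1,r-1}$, and precisely those extremal digraphs escape your single-vertex deletion: your $\delta=2$ branch can only ever produce the subfamily ($A_r$, $B_{r,0}$, $B_{r,r-1}$, and the perfect-matching attachments) in which some triangle vertex has degree two. So the inequality you need in the $\delta\geq 3$ case is not merely hard --- it is contradicted, and the second and third assertions of the theorem (the source/sink statement for $m \geq r^2+3$ and the structure at $m=r^2+2$) cannot be recovered along this route.

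The paper avoids vertex deletion entirely and instead argues at the level of the underlying graph $H$: by Lemma~\ref{diamondfree}, $H$ is $K_4^-$-free; if $H$ were triangle-free, stability results force $H$ to be bipartite, and a careful analysis (three vertices of the larger part joined to all of the smaller part, Theorem~\ref{completebipartitegraphs}, disjointness of out-neighbourhoods) locates two vertices whose deletion leaves a $2$-geodetic digraph on $2(r-1)$ vertices with more than $(r-1)^2$ arcs, contradicting Theorem~\ref{n^2/4}. Hence $H$ contains a triangle $T$, and $K_4^-$-freeness means every vertex outside $T$ is adjacent to at most one vertex of $T$; deleting all three vertices of $T$ therefore removes at most $n = 2r+1$ arcs, so $m \leq (2r+1) + ex(2r-2;2) = (2r+1)+(r-1)^2 = r^2+2$, with equality forcing exactly the claimed structure. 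That triangle-deletion step is the idea your proposal is missing: it yields the bound uniformly, with no case split on minimum degree, and it is what makes the structure statement come out in full generality. Your preliminary observations (the bound $m \leq r^2 + d^+(v)+d^-(v)$, degrees at most $r$, independence of $N^+(v)$ and $N^-(v)$) are all correct, and your perfect-matching attachment analysis is sound as far as it goes, but without a replacement for the false $\delta \geq 3$ claim the proof does not close.
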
    
	\begin{proof}
		The result for $r \leq 2$ follows easily by computer search, so we can assume that $r \geq 3$. Let $G$ be a $2$-geodetic digraph with size $m \geq r^2+2$ and $H$ be the underlying undirected graph of $G$; then $H$ is $K_4^-$-free. We will proceed to show that $H$ contains a triangle with a special substructure. 
		
		Suppose that $H$ is triangle-free. As the size of $H$ is at least $r^2+2$, it follows by the stability results of~\cite{Erd2,Sim} that $H$ is bipartite. We will name the larger partite set $X$ and the smaller $Y$. We claim that there are at least three vertices of $X$ that are connected to every vertex of $Y$.  Otherwise, setting $t = |X|$, the size of $H$ is bounded above by 
		\[ f(t) = (t-2)(2r-t)+2(2r+1-t) = -t^2+2rt+2,\] 
		where $r+1 \leq t \leq 2r+1$ as $|X| > |Y|$. The function $f(t)$ has its maximum at $t = r+1$, where $f(r+1) = r^2+1 < m$.
		
		Let $X'$ be the set of vertices in $X$ that are adjacent to every vertex of $Y$ and let $H'$ be the complete bipartite subgraph of $H$ with partite sets $X'$ and $Y$.  By Theorem~\ref{completebipartitegraphs} all edges of $H$ between $X$ and $Y$ are directed in the same direction with the exception of a matching $M$ of size $\leq r$ in the opposite direction.  Taking the converse of $G$ if necessary we can assume that the edges of $M$ are directed from $Y$ to $X$, with all other edges in the other direction. By assumption $G$ contains no sources or sinks, so every vertex of $X'$ must be incident with an edge of the matching $M$. 
		
		Therefore if two vertices $y,y'$ of $Y$ have a common out-neighbour $x$, then there will be a vertex $x' \in X'$ that has arcs to both $y$ and $y'$ and hence has two paths of length two to $x$, violating 2-geodecity. Hence the out-neighbourhoods of the vertices of $Y$ are pairwise disjoint. No vertex of $X$ is a source and so each vertex of $X$ has at least one in-neighbour in $Y$. As $|X| > |Y|$ there must be a vertex $y \in Y$ that has (at least) two out-neighbours $x_1$ and $x_2$ in $X$. If $x_1$ and $x_2$ had a common out-neighbour $y' \in Y$ then there would be two paths of length two from $y$ to $y'$, so we have $N^+(x_1) \cap N^+(x_2) = \emptyset $.  Hence there are at most two arcs incident to $\{ x_1,x_2\} $ and at most $r-1$ arcs incident from $\{ x_1,x_2\} $, so there are at most $r+1$ arcs incident with $x_1$ or $x_2$.  If we delete $x_1$ and $x_2$ we would thus obtain a 2-geodetic digraph with order $n = 2(r-1) \geq 4$ and size $m'$, where
		\[ m' \geq m-r-1 \geq r^2-2+1 > (r-1)^2,\]
		contradicting Theorem~\ref{basic theorem}. Therefore $H$ contains a triangle $T$.    
		
		Let us label the vertices of $T$ by $x,y,z$. As $G$ is $K_4^-$-free every vertex of $V(G)-T$ is adjacent to at most one vertex of $T$, so deleting $T$ from $G$ removes at most $n = 2r+1$ arcs. By Theorem~\ref{basic theorem} the size of $G-T$ is at most $(r-1)^2$. Thus $m \leq (2r+1)+(r-1)^2 = r^2+2$ and equality holds only if $G-T$ is an extremal digraph given in Theorem~\ref{completebipartitegraphs} and every vertex of $H-T$ is adjacent with exactly one vertex of $T$. 
	\end{proof}

	We turn now to the question of the largest size of strongly-connected $k$-geodetic digraphs for $k \geq 3$. It is trivial to provide a stronger upper bound on $\ex(n;k)$ than Theorem~\ref{basic theorem} for $k \geq 5$.
	
	\begin{lemma}
		For $k \geq 5$ we have $\ex(n;k) < \frac{n^2}{k}$.	
	\end{lemma}	
	\begin{proof}
		Let $G$ be a $k$-geodetic digraph without sinks. Suppose that $G$ contains a vertex $u$ with out-degree $d^+(u) \geq \frac{n}{k}$. As every vertex has out-degree at least one, it follows that $|N^{+t}(u)| \geq d^+(u) = \frac{n}{k}$ for $1 \leq t \leq k$, where $N^{+t}(u)$  denotes the set of vertices at distance $t$ from $u$. As $G$ is $k$-geodetic, all of the vertices in these sets are distinct, so it follows that $n \geq 1 +  k\frac{n}{k}$, a contradiction. Hence the maximum out-degree of $G$ is $\Delta ^+ < \frac{n}{k}$ and, summing over all vertices of $G$, the size of $G$ is $m < n\frac{n}{k} = \frac{n^2}{k}$.   	
	\end{proof}
	It follows that $\displaystyle\limsup _{n \rightarrow\infty }\frac{\ex(n;k)}{n^2} \leq \frac{1}{k}$. We now provide a construction that shows that \sloppy ${\displaystyle\frac{1}{k^2} \leq \liminf _{n \rightarrow\infty }\frac{\ex(n;k)}{n^2}}$.
	
	\begin{definition}
		Let the quotient and remainder when $n$ is divided by $k$ be $r$ and $s$ respectively, i.e. $n = kr+s$, where $s \leq r$ (this will hold for sufficiently large $n$).  The vertex set of $G(n,k)$ consists of vertices $u_{i,j}$ for $1 \leq i \leq r$ and $1 \leq j \leq k$, as well as $s$ further vertices $v_1,v_2,\dots v_s$.  We define the adjacencies of $G(n,k)$ as follows. 
		
		\begin{enumerate}[label=\roman*)]
			\item $u_{i,j} \rightarrow u_{i,j+1}$ for $1 \leq i \leq r$ and $1 \leq j \leq k-1$,
			\item $u_{i,k} \rightarrow v_i$ for $1 \leq i \leq s$,
			\item $u_{i,k} \rightarrow u_{j,2}$ for $s+1 \leq i \leq r$ and $1 \leq j \leq s$,
			\item $u_{i,k} \rightarrow u_{i',1}$ for $s+1 \leq i,i' \leq r$ and $i \not = i'$, and
			\item $v_t \rightarrow u_{i,1}$ for $1 \leq t \leq s$ and all $i$ in the range $1 \leq i \leq r$.
		\end{enumerate}
	\end{definition}
	
	The digraph $G(n,k)$ is $k$-geodetic and has size $m = rs+(k-1)r+s+(r-s)(r-1) = r^2+(k-2)r+2s$. If $r+1 \leq s \leq k-1$, then we have $\lfloor \frac{n}{k} \rfloor \leq k -2$, which is equivalent to $n \leq k^2-k-1$; thus these digraphs exist for $n \geq k^2-k$. The arcs in part~iii) can also be directed to $u_{j,1}$; combined with taking the converse of the resulting digraphs, this generates several different isomorphism classes. These digraphs admit a particularly simple description when $k|n$. Let $n = kr$ for some $r \geq 2$. Then $G(kr,k)$ is $k$-geodetic and has order $kr$ and size $r(r-1)+r(k-1) = r^2 + (k-2)r = \frac{n^2}{k^2}+\frac{(k-2)n}{k}$. It has vertices $u_{i,j}$, where $1 \leq i \leq r$ and $1 \leq j \leq k$ and contains the following arcs:
	\begin{enumerate}[label=\roman*)]
		\item $u_{i,j} \rightarrow u_{i,j+1}$ for $1 \leq i \leq r$ and $2 \leq j \leq k-1$,
		\item $u_{i,1} \rightarrow u_{i',2}$ for $1 \leq i,i' \leq r$ and $i \not = i'$, and
		\item $u_{i,k} \rightarrow u_{i,1}$ for $1 \leq i \leq r$.
	\end{enumerate}
	Observe that $G(kr,k)$ can be derived from the orientation of $K_{r,r}$ with a perfect matching pointing in one direction and all other arcs directed in the opposite direction by extending the perfect matching into paths of length $k-1$. The digraph $G(24;6)$ is shown in Figure~\ref{fig:Gkr}.

	\begin{figure}\centering
		\begin{tikzpicture}[midarrow=stealth,x=0.2mm,y=-0.2mm,inner sep=0.1mm,scale=1.85,
			thick,vertex/.style={circle,draw,minimum size=10,font=\tiny,fill=white},edge label/.style={fill=white}]
			\tiny
			\node at (-100,100) [vertex] (u00) {};
			\node at (-50,100) [vertex] (u01) {};		
			\node at (0,100) [vertex] (u02) {};			
			\node at (50,100) [vertex] (u03) {};
			\node at (100,100) [vertex] (u04) {};		
			\node at (150,100) [vertex] (u05) {};			
			
			\node at (-100,50) [vertex] (u10) {};
			\node at (-50,50) [vertex] (u11) {};		
			\node at (0,50) [vertex] (u12) {};			
			\node at (50,50) [vertex] (u13) {};
			\node at (100,50) [vertex] (u14) {};		
			\node at (150,50) [vertex] (u15) {};
			
			\node at (-100,0) [vertex] (u20) {};
			\node at (-50,0) [vertex] (u21) {};		
			\node at (0,0) [vertex] (u22) {};			
			\node at (50,0) [vertex] (u23) {};
			\node at (100,0) [vertex] (u24) {};		
			\node at (150,0) [vertex] (u25) {};
			
			\node at (-100,-50) [vertex] (u30) {};
			\node at (-50,-50) [vertex] (u31) {};		
			\node at (0,-50) [vertex] (u32) {};			
			\node at (50,-50) [vertex] (u33) {};
			\node at (100,-50) [vertex] (u34) {};		
			\node at (150,-50) [vertex] (u35) {};		
			\path
			(u04) edge [midarrow] (u03)
			(u03) edge [midarrow] (u02)
			(u02) edge [midarrow] (u01)
			(u01) edge [midarrow] (u00)
			
			(u14) edge [midarrow] (u13)
			(u13) edge [midarrow] (u12)
			(u12) edge [midarrow] (u11)
			(u11) edge [midarrow] (u10)
			
			(u24) edge [midarrow] (u23)
			(u23) edge [midarrow] (u22)
			(u22) edge [midarrow] (u21)
			(u21) edge [midarrow] (u20)
			
			(u34) edge [midarrow] (u33)
			(u33) edge [midarrow] (u32)
			(u32) edge [midarrow] (u31)
			(u31) edge [midarrow] (u30)		
			
			(u05) edge [midarrow] (u14)
			(u05) edge [midarrow] (u24)
			(u05) edge [midarrow] (u34)
			
			(u15) edge [midarrow] (u04)
			(u15) edge [midarrow] (u24)
			(u15) edge [midarrow] (u34)
			
			(u25) edge [midarrow] (u04)
			(u25) edge [midarrow] (u14)
			(u25) edge [midarrow] (u34)
			
			(u35) edge [midarrow] (u04)
			(u35) edge [midarrow] (u14)
			(u35) edge [midarrow] (u24)
			
			(u00) edge [midarrow,bend left = 20] (u05)
			(u10) edge [midarrow,bend left = 20] (u15)
			(u20) edge [midarrow,bend left = 20] (u25)
			(u30) edge [midarrow,bend left = 20] (u35)

			;

		\end{tikzpicture}
		\caption{$G(24,6)$}
		\label{fig:Gkr}
	\end{figure}		
	
	\begin{table}

		\begin{small}
			\begin{center}
				\begin{tabular}{| c||c|c| c| c|c|  }
					\hline
					
					$n/k$ & 3 & 4 & 5 & 6  \\
					\hline \hline
					7 & 8 &  &  &  \\
					\hline
					8 & 10 &  &  & \\
					\hline
					9 & 12 & 10 &  &  \\
					\hline
					10 & 14 & 12 &  &  \\
					\hline
					11 & 16 & 14 & 12 &  \\
					\hline
					12 & 20 & 15 & 14 &  \\
					\hline
					13 & 22 & 17 & 15 & 14 \\
					\hline
					14 & 24 & 19 & 17 & 16 \\
					\hline
					15 &  & 21 & 18 & 17 \\
					\hline
					16 &  &  & 20 & 19  \\
					\hline
					17 &  &  & 22 & 20  \\
					\hline
					18 &  &  &  & 21 \\
					\hline
					19 &  &  &  & 23  \\
					\hline
					
					\hline
				\end{tabular}
			\end{center}
		\end{small}
		\caption{$\ex(n;k)$ for some small values of $n$ and $k$}
		\label{fig:extable}
	\end{table}
	
	Table~\ref{fig:extable} displays the results of computational work on the values of $\ex(n;k)$ for some small values of $n$ and $k \geq 3$. It can be seen that the digraph $G(n,k)$ has largest possible size whenever $n = kr+s$, where $s \leq \min\{r,k-1\}$.  In fact for $n$ and $k$ in the above range such that $k|n$ we can say further that the underlying undirected graph of $G(n,k)$ is the unique graph with size $\frac{n^2}{k^2}+\frac{(k-2)n}{k}$ that has a strongly-connected $k$-geodetic orientation. This leads us to make the following conjecture that generalises Theorem~\ref{thm:2_Geo_Ext_Number}.
	
	\begin{conjecture}
		For $k \geq 2$ and sufficiently large $n$, 
		\[ \ex(n;k) = \left \lfloor \frac{n}{k} \right \rfloor ^2 - (k+2)\left \lfloor \frac{n}{k} \right \rfloor +2n. \]
		Also if $k|n$, then $G(n,k)$ is the unique extremal digraph with that order. 
	\end{conjecture}

	\section{Classification of extremal $2$-geodetic digraphs without sources and sinks}\label{section:classification}
	
	In the previous section it was shown that for $r \geq 1$, any strongly-connected $2$-geodetic digraph with order $n = 2r+1$ has at most $\ex(2r+1;2) = r^2+2$ arcs. In this section we will classify the strongly-connected $2$-geodetic digraphs that achieve this bound. Our analysis will focus on the case $r \geq 5$, i.e. odd $n \geq 11$. Computer search shows that there is a unique extremal strongly-connected $2$-geodetic digraph with size $r^2+2$ for $r = 1$, $3$ extremal digraphs for $r = 2$, $29$ solutions for $r = 3$ and $19$ solutions for $r = 4$; and any $2$-geodetic digraphs with larger size contain either a source or a sink.
	
	Let $G$ be a $2$-geodetic digraph with order $n = 2r+1 \geq 11$, size $r^2+2$ and no sources or sinks and let $H$ be the underlying undirected graph of $G$. By Theorem~\ref{thm:2_Geo_Ext_Number}, $H$ contains a triangle $T$ with vertices $x, y, z$, which is oriented in $G$ as $x \rightarrow y \rightarrow z \rightarrow x$ and each vertex in $H-T$ is adjacent to exactly one of $x, y$ or $z$.  Furthermore, $G-T$ must be one of the $r$ orientations of $K_{r-1,r-1}$ given in Theorem~\ref{completebipartitegraphs}.  Let the bipartition of $K_{r-1,r-1}$ be $X, Y$,  where $X = \{ x_1, \ldots , x_{r-1}\} , Y = \{ y_1, \ldots , y_{r-1}\} $; we can assume that $x_i \rightarrow y_i$ for $1 \leq i \leq r-1-s$ for some $0 \leq s \leq r-1$, with all other edges oriented in the other direction, so that there are $s$ sources and $s$ sinks in $G-T$.
	
	We will say that a partite set is \emph{covered} by a subset $T'$ of $T$ if all of its neighbours in $T$ belong to $T'$; in particular, if all of the neighbours of a partite set, say $X$, are the same vertex of $T$, say $x$, then $X$ is covered by $x$.
	We call a vertex in $T$ \emph{bad} if it has neighbours in both partite sets of $H-T$.
	
	\begin{lemma}\label{badvertices}
		Any bad vertex of $T$ has degree four in $H$.  If $n \geq 11$ then there is at most one bad vertex. 
	\end{lemma}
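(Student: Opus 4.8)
The plan is to prove the two assertions separately: the degree bound follows from the $K_4^-$-freeness of the underlying graph (Lemma~\ref{diamondfree}), while the uniqueness of a bad vertex reduces to a short edge count between $T$ and $H-T$.

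For the degree statement I would work entirely in the underlying graph $H$, which is $K_4^-$-free by Lemma~\ref{diamondfree}. Fix a bad vertex $w \in T$, so that $w$ has a neighbour in $X$ and a neighbour in $Y$. Since $G-T$ is an orientation of $K_{r-1,r-1}$, the graph $H-T$ is complete bipartite between $X$ and $Y$. Were $w$ adjacent to two distinct vertices $a,a' \in X$ as well as to some $b \in Y$, the four distinct vertices $w,a,a',b$ would span the five edges $wa,wa',wb,ab,a'b$ (the last two coming from the complete bipartite structure) and lack only $aa'$, giving an induced $K_4^-$; this is impossible. Hence $w$ has exactly one neighbour in $X$, and symmetrically exactly one in $Y$. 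Together with the two triangle edges at $w$ this yields $\deg_H(w)=4$.

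For the uniqueness statement I would count the edges joining $T$ to $H-T$. Because every vertex of $H-T$ is adjacent to exactly one vertex of $T$ (Theorem~\ref{thm:2_Geo_Ext_Number}), the three neighbourhoods $N_x,N_y,N_z$ partition $X \cup Y$, so $|N_x|+|N_y|+|N_z| = 2r-2$. By the degree statement a bad vertex $w$ satisfies $|N_w|=2$, whereas a good vertex has all its neighbours in $H-T$ confined to one partite set and hence $|N_w| \le r-1$. Suppose two triangle vertices, say $x$ and $y$, are bad; then $|N_x|=|N_y|=2$ and so $|N_z|=2r-6$. The third vertex $z$ cannot also be bad, as three bad vertices would account for only $6$ edges while $2r-2 \ge 8$; thus $z$ is good and $2r-6 = |N_z| \le r-1$, forcing $r \le 5$.

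Combined with $n \ge 11$, i.e. $r \ge 5$, this pins down $r=5$, which I expect to be the only delicate point. There $|N_z| = 4 = r-1$, so the good vertex $z$ is joined to an \emph{entire} partite set, say all of $X$; but the badness of $x$ guarantees that $x$ has a neighbour $a \in X$, and this $a$ would then be adjacent to both $z$ and $x$, contradicting that each vertex of $H-T$ meets $T$ exactly once. The main obstacle is precisely this tight boundary case: for $r \ge 6$ the cardinality count closes on its own, but at $r=5$ one must replace counting with the structural fact that a vertex of $H-T$ has a unique neighbour in $T$.
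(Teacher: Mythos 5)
Your proposal is correct, and its first half is essentially the paper's own argument: both derive $\deg_H(w)=4$ from Lemma~\ref{diamondfree} by observing that two neighbours in one partite set plus one in the other would span a $K_4^-$ (the paper phrases this as ``degree $\geq 5$ forces $\geq 3$ neighbours in $K_{r-1,r-1}$''). Where you diverge is the uniqueness half. The paper also uses the fact that the neighbourhoods $N_x,N_y,N_z$ partition $X \cup Y$, but it closes the argument structurally rather than by cardinality: if two vertices of $T$ were bad, they would cover only two vertices of each partite set, so the remaining $r-3 \geq 2$ vertices of $X$ \emph{and} the remaining $r-3 \geq 2$ vertices of $Y$ would all have to attach to the third triangle vertex, forcing it to be bad as well; and three bad vertices cover only three vertices per side, fewer than $r-1 \geq 4$. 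Your version retains only the cardinalities $|N_x|+|N_y|+|N_z|=2r-2$ and $|N_z| \leq r-1$, which closes the count only for $r \geq 6$ and obliges you to handle $r=5$ separately with the (correct) observation that $N_z$ would then exhaust a full partite set, clashing with the badness of $x$. Both arguments are sound; the paper's buys a uniform treatment of all $r \geq 5$ by remembering \emph{which} partite sets the uncovered vertices lie in, while yours trades that structural information for a more mechanical count at the cost of one boundary case.
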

	\begin{proof}
		It is easily seen that if a bad vertex has degree $\geq 5$ in $H$, and hence $\geq 3$ neighbours in $K_{r-1,r-1}$, then $H$ contains a copy of $K_4^-$, which is impossible by Theorem~\ref{basic theorem}.  As any bad vertex of $T$ is adjacent to one vertex of $X$ and one vertex of $Y$, for $r \geq 5$ not all three vertices of $T$ can be bad. Furthermore if two vertices of $T$ are bad then the third vertex of $T$ would also have to be bad.        
	\end{proof}
	
	\begin{lemma}
		If there is no bad vertex in $T$, then either $X$ or $Y$ is covered by a single vertex of $T$ and $s \leq 1$. 
	\end{lemma}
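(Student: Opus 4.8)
The plan is to establish the two conclusions in turn: the covering statement by a counting argument on the three vertices of $T$, and the bound $s \le 1$ from $2$-geodecity combined with the hypothesis that $G$ has neither sources nor sinks. I first recall the orientation set up before the lemma: after replacing $G$ by its converse if necessary, every edge of $G - T = K_{r-1,r-1}$ is directed from $Y$ to $X$, apart from the matching arcs $x_i \rightarrow y_i$ for $1 \le i \le r-1-s$. Hence the $s$ unmatched vertices of $X$ are sinks of $G-T$ and the $s$ unmatched vertices of $Y$ are sources of $G-T$; since $G$ has no sink or source, each of these vertices must acquire its missing arc through its unique neighbour in $T$.

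For the covering statement I would let $A$ be the set of vertices of $T$ having a neighbour in $X$ and $B$ the set of vertices of $T$ having a neighbour in $Y$. The assumption that $T$ has no bad vertex says precisely that $A \cap B = \emptyset$. Every vertex of $X \cup Y$ is adjacent to exactly one vertex of $T$ and both $X$ and $Y$ are nonempty (as $r \ge 5$), so $A$ and $B$ are each nonempty. Since $|A| + |B| \le |T| = 3$ with $|A|,|B| \ge 1$, one of them is a singleton; if $A = \{t\}$ then, as each vertex of $X$ has exactly one neighbour in $T$, all of these neighbours equal $t$, so $X$ is covered by $t$, and the case $|B| = 1$ is symmetric.

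To bound $s$, suppose first that $X$ is covered by a single vertex $t_X$. Then every sink of $G-T$ lying in $X$ is attached to $t_X$ and must carry the arc $x_i \rightarrow t_X$, as otherwise it would still be a sink of $G$. If $s \ge 2$, choose two such sinks $x_i \ne x_{i'}$; being unmatched, each receives an arc from every vertex of $Y$, so for any $y_j \in Y$ the walks $y_j \rightarrow x_i \rightarrow t_X$ and $y_j \rightarrow x_{i'} \rightarrow t_X$ are two distinct length-two walks with the same endpoints, contradicting $2$-geodecity. Hence $s \le 1$.

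The crux is this second step, and the point that needs care is that the two defect vertices be forced onto the \emph{same} vertex of $T$ — which is exactly what the covering statement guarantees — together with the existence of a suitable common neighbour. The symmetric situation, in which $Y$ rather than $X$ is the singly-covered set, must be treated in parallel: there the $s$ sources of $G-T$ in $Y$ all hang from one vertex $t_Y$ via arcs $t_Y \rightarrow y_j$, each such source points to every vertex of $X$, and any common out-neighbour $x_i$ then produces two length-two walks out of $t_Y$, once more violating $2$-geodecity. Keeping the orientation bookkeeping consistent across these two cases is the main thing to watch.
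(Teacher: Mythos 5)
Your proof is correct and follows essentially the same route as the paper: the no-bad-vertex hypothesis forces each vertex of $T$ to have its $K_{r-1,r-1}$-neighbours in a single partite set, so one partite set is covered by one vertex, and for $s \ge 2$ the forced arcs from the defect vertices into (or out of) that covering vertex create two length-two walks violating $2$-geodecity. The only cosmetic difference is that in the $X$-covered case you take an arbitrary $y_j \in Y$ as the common in-neighbour of the two sinks, where the paper uses a source of $Y$; both work since every unmatched vertex of $X$ receives arcs from all of $Y$.
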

	\begin{proof}
		If there is no bad vertex, then the neighbours of any vertex of $T$ in $K_{r-1,r-1}$ must be entirely contained in one partite set, so one partite set is covered by one vertex of $T$ and the other partite set is covered by the other two vertices of $T$.
		
		Concerning the value of $s$, suppose that $s \geq 2$ and that $X$ is covered by $x$ (the argument for $Y$ is similar).  There must be an arc from every sink in $X$ to $x$.  But any source in $Y$ has arcs to all the sinks in $X$ and hence will have multiple 2-paths to $x$, contradicting 2-geodecity.
	\end{proof}
	
	\begin{lemma}\label{inneighboursinX}
		Any vertex of $T$ with neighbours in $X$ has at most one in-neighbour in $X$.  Any vertex of $T$ that is joined to $\geq 2$ non-sink vertices of $X$ has no in-neighbour among the non-sink vertices of $X$. Substituting `source' for `sink' and `out-neighbour' for `in-neighbour', the analogous results hold for $Y$.  
	\end{lemma}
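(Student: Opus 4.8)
The plan is to read everything off the explicit description of $G-T$ as an oriented $K_{r-1,r-1}$. Recall the convention fixed just above: every edge between $X$ and $Y$ is oriented from $Y$ to $X$, except for the reversed matching arcs $x_i \to y_i$ with $1 \le i \le r-1-s$. Thus the non-sink vertices of $X$ are exactly $x_1,\dots,x_{r-1-s}$, each satisfying $x_i \to y_i$ and receiving an arc from every $y_j$ with $j \ne i$, while the sinks $x_i$ ($i>r-1-s$) receive an arc from all of $Y$. The one structural fact I will use repeatedly is that every vertex of $X$ has at least $r-2$ in-neighbours in $Y$ (all of $Y$, minus possibly its own matched partner). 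Since $|Y|=r-1$, inclusion–exclusion gives that any two vertices of $X$ share at least $(r-2)+(r-2)-(r-1)=r-3$ common in-neighbours in $Y$; as $r\ge 5$ this is positive, so any two vertices of $X$ have a common in-neighbour in $Y$. Dually, any two vertices of $Y$ have a common out-neighbour in $X$.

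First I would prove the in-degree bound. Suppose $w\in T$ had two distinct in-neighbours $x_i,x_j\in X$, so $x_i\to w$ and $x_j\to w$. Choosing a common in-neighbour $y_\ell\in Y$ of $x_i$ and $x_j$ produces the two distinct $2$-walks $y_\ell\to x_i\to w$ and $y_\ell\to x_j\to w$ (distinct because their middle vertices $x_i\ne x_j$ differ), contradicting $2$-geodecity. Hence $w$ has at most one in-neighbour in $X$.

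For the second statement, suppose $w$ is adjacent to at least two non-sink vertices of $X$ and, for contradiction, that one of them, $x_i$, is an in-neighbour, so $x_i\to w$ with $i\le r-1-s$. By the bound just proved, $x_i$ is the \emph{only} in-neighbour of $w$ in $X$, so a second non-sink neighbour $x_a\ne x_i$ must be an out-neighbour, $w\to x_a$. Now $x_i\to w\to x_a$ and $x_i\to y_i\to x_a$ are two distinct $2$-walks from $x_i$ to $x_a$: the arc $x_i\to y_i$ is valid since $x_i$ is non-sink, and $y_i\to x_a$ is a majority arc, valid since $a\ne i$. Again this contradicts $2$-geodecity, so $w$ has no in-neighbour among the non-sink vertices of $X$.

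The statements for $Y$ follow by the same argument with all arcs reversed. Because the majority direction is $Y\to X$, in-neighbourhoods inside $X$ and out-neighbourhoods inside $Y$ are the large ones, so the roles of `in' and `out', and of `sink' and `source', swap cleanly: a vertex $w$ with two out-neighbours $y_i,y_j$ in $Y$ yields $w\to y_i\to x_\ell$ and $w\to y_j\to x_\ell$ through a common out-neighbour $x_\ell$, and an out-neighbour $y_i$ among non-source vertices together with a non-source in-neighbour $y_a\ne y_i$ yields $y_a\to w\to y_i$ and $y_a\to x_i\to y_i$. The only points needing care are pinning down the orientation convention so that the large neighbourhoods point the right way, and checking in each case that the two exhibited walks are genuinely distinct (they always differ in their middle vertex); the standing hypothesis $r\ge 5$ is what guarantees the common neighbour required in the first step exists.
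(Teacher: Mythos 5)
Your proof is correct and follows essentially the same route as the paper: for the first claim you produce a common in-neighbour $y_\ell$ of the two in-neighbours $x_i,x_j$ (the paper simply picks $\ell\notin\{i,j\}$ directly rather than via inclusion--exclusion, but it is the same idea), and for the second claim you use the first part to force $w\to x_a$ and then exhibit the two $2$-paths $x_i\to w\to x_a$ and $x_i\to y_i\to x_a$, exactly as the paper does. Your dualisation to $Y$ likewise matches the paper's ``the results for $Y$ follow in a similar manner,'' just spelled out more explicitly.
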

	\begin{proof}
		Suppose that a vertex of $T$, say $x$, has $\geq 2$ in-neighbours $x_i$ and $x_j$ in $X$. For any $\ell \in \{ 1,2,\dots , r-1\} - \{ i,j\} $ we have $y_\ell \rightarrow x_i \rightarrow x$ and $y_\ell \rightarrow x_j \rightarrow x$, a contradiction.
		
		Now let $x$ be adjacent to vertices $x_i$ and $x_j$ in $X$, where we now assume that $x_i$ and $x_j$ are not sinks in $G-T$.  If $x_i \rightarrow x$, then as $x$ has at most one in-neighbour in $X$ we must have $x \rightarrow x_j$.  Hence there are paths $x_i \rightarrow x \rightarrow x_j$ and $x_i \rightarrow y_i \rightarrow x_j$, a contradiction.  The results for $Y$ follow in a similar manner.	\end{proof}

	First we shall deal with the case that $T$ has no bad vertices.  Assume firstly that $X$ is covered by $x$.  Suppose that $s = 1$ (Figure~\ref{fig:A6}).   Then $x_{r-1}$ and $y_{r-1}$ are the sink and the source of $G-T$ respectively. Now $x$ must have an arc from the sink so that it does not remain a sink in $G$; hence by Lemma~\ref{inneighboursinX} we have $x \rightarrow x_i$ for $1 \leq i \leq r-2$. $Y$ is covered by $y$ and $z$ and either $y$ or $z$ has an arc to the source $y_{r-1}$. 
	
	If $z$ has an arc to $Y$ then there would be multiple 2-paths from $z$ to a non-sink vertex in $X$ and similarly if $z$ has an in-neighbour $y_i$ in $Y$, then there would be 2-paths $y_i \rightarrow z \rightarrow x$ and $y_i \rightarrow x_{r-1} \rightarrow x$.  Therefore $z$ has no neighbours in $Y$, so $y$ must have an arc to $y_{r-1}$ and by Lemma \ref{inneighboursinX} $y_i \rightarrow y$ for $1 \leq i \leq r-2$. This yields the 2-geodetic digraph $A_r$, an example of which is shown in Figure~\ref{fig:A6}.  This digraph is isomorphic to its converse. 
	
	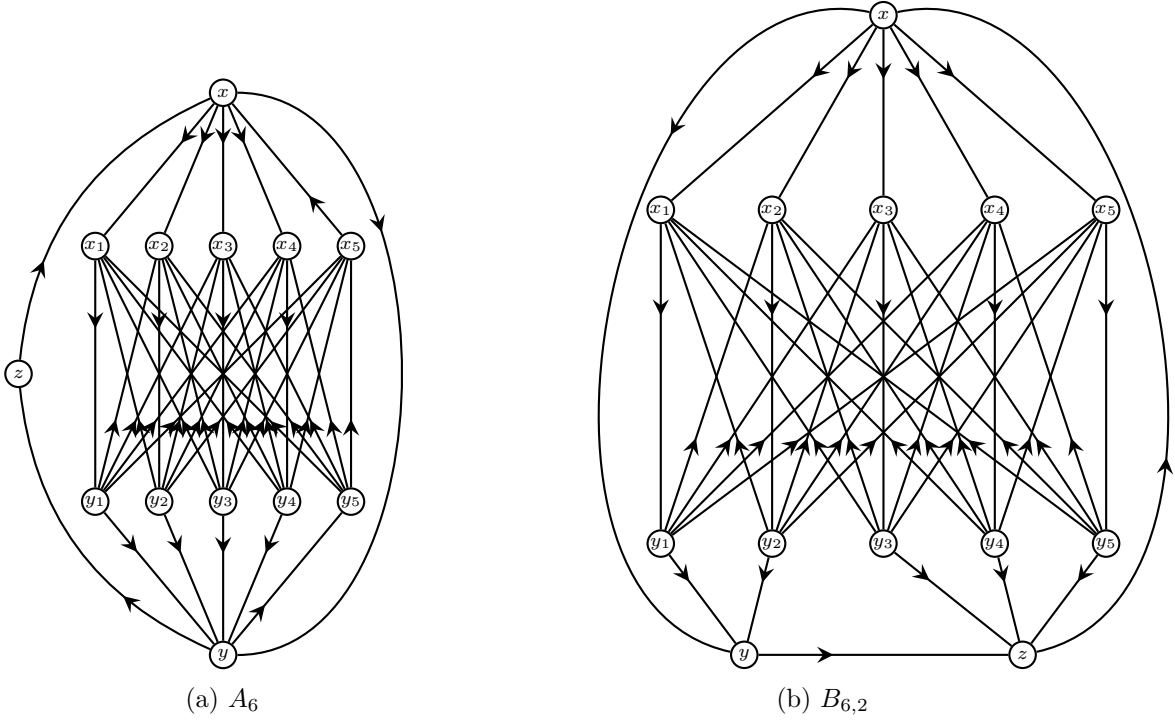
\begin{figure}\centering
		\begin{subfigure}[b]{0.35\textwidth}\centering
			\begin{tikzpicture}[midarrow=stealth,x=0.2mm,y=-0.2mm,inner sep=0.1mm,scale=1.70,
				thick,vertex/.style={circle,draw,minimum size=10,font=\tiny,fill=white},edge label/.style={fill=white}]
				\tiny
				
				\node at (50,-110) [vertex] (x) {$x$};
				\node at (-30,0) [vertex] (z) {$z$};
				\node at (50,110) [vertex] (y) {$y$};	
				
				\node at (0,-50) [vertex] (x1) {$x_1$};
				\node at (25,-50) [vertex] (x2) {$x_2$};
				\node at (50,-50) [vertex] (x3) {$x_3$};
				\node at (75,-50) [vertex] (x4) {$x_4$};
				\node at (100,-50) [vertex] (x5) {$x_5$};					
				\node at (0,50) [vertex] (y1) {$y_1$};
				\node at (25,50) [vertex] (y2) {$y_2$};
				\node at (50,50) [vertex] (y3) {$y_3$};
				\node at (75,50) [vertex] (y4) {$y_4$};
				\node at (100,50) [vertex] (y5) {$y_5$};		
				
				\path
				
				(x) edge [midarrow, bend left=90] (y)
				(y) edge [midarrow, bend left] (z)
				(z) edge [midarrow, bend left] (x)
				
				(y) edge [midarrow] (y5)
				(y1) edge [midarrow] (y)
				(y2) edge [midarrow] (y)
				(y3) edge [midarrow] (y)
				(y4) edge [midarrow] (y)			
				
				(x5) edge [midarrow] (x)
				(x) edge [midarrow] (x1)
				(x) edge [midarrow] (x2)
				(x) edge [midarrow] (x3)
				(x) edge [midarrow] (x4)
				
				(x1) edge [midarrow] (y1)	
				(x2) edge [midarrow] (y2)
				(x3) edge [midarrow] (y3)
				(x4) edge [midarrow] (y4)
				(y5) edge [midarrow] (x5)	
				
				(y1) edge [midarrow] (x2)
				(y1) edge [midarrow] (x3)	
				(y1) edge [midarrow] (x4)	
				(y1) edge [midarrow] (x5)	
				
				(y2) edge [midarrow] (x1)
				(y2) edge [midarrow] (x3)	
				(y2) edge [midarrow] (x4)	
				(y2) edge [midarrow] (x5)
				
				(y3) edge [midarrow] (x2)
				(y3) edge [midarrow] (x1)	
				(y3) edge [midarrow] (x4)	
				(y3) edge [midarrow] (x5)	
				
				(y4) edge [midarrow] (x2)
				(y4) edge [midarrow] (x3)	
				(y4) edge [midarrow] (x1)	
				(y4) edge [midarrow] (x5)
				
				(y5) edge [midarrow] (x2)
				(y5) edge [midarrow] (x3)	
				(y5) edge [midarrow] (x4)	
				(y5) edge [midarrow] (x1)

				;

			\end{tikzpicture}
			\caption{$A_6$}
			\label{fig:A6}
		\end{subfigure}
		\quad
		\begin{subfigure}[b]{0.55\textwidth}\centering
			\begin{tikzpicture}[midarrow=stealth,x=0.2mm,y=-0.2mm,inner sep=0.1mm,scale=1.85,
				thick,vertex/.style={circle,draw,minimum size=10,font=\tiny,fill=white},edge label/.style={fill=white}]
				\tiny
				
				\node at (80,-130) [vertex] (x) {$x$};
				\node at (30,100) [vertex] (y) {$y$};
				\node at (130,100) [vertex] (z) {$z$};	
				
				\node at (0,-60) [vertex] (x1) {$x_1$};
				\node at (40,-60) [vertex] (x2) {$x_2$};
				\node at (80,-60) [vertex] (x3) {$x_3$};
				\node at (120,-60) [vertex] (x4) {$x_4$};
				\node at (160,-60) [vertex] (x5) {$x_5$};					
				\node at (0,60) [vertex] (y1) {$y_1$};
				\node at (40,60) [vertex] (y2) {$y_2$};
				\node at (80,60) [vertex] (y3) {$y_3$};
				\node at (120,60) [vertex] (y4) {$y_4$};
				\node at (160,60) [vertex] (y5) {$y_5$};		
				
				\path
				
				(x) edge [midarrow, bend right=90] (y)
				(y) edge [midarrow] (z)
				(z) edge [midarrow, bend right=90] (x)
				
				(y1) edge [midarrow] (y)
				(y2) edge [midarrow] (y)
				(y3) edge [midarrow] (z)
				(y4) edge [midarrow] (z)
				(y5) edge [midarrow] (z)			
				
				(x) edge [midarrow] (x1)
				(x) edge [midarrow] (x2)
				(x) edge [midarrow] (x3)
				(x) edge [midarrow] (x4)
				(x) edge [midarrow] (x5)	
				
				(x1) edge [midarrow] (y1)	
				(x2) edge [midarrow] (y2)
				(x3) edge [midarrow] (y3)
				(x4) edge [midarrow] (y4)
				(x5) edge [midarrow] (y5)	
				
				(y1) edge [midarrow] (x2)
				(y1) edge [midarrow] (x3)	
				(y1) edge [midarrow] (x4)	
				(y1) edge [midarrow] (x5)	
				
				(y2) edge [midarrow] (x1)
				(y2) edge [midarrow] (x3)	
				(y2) edge [midarrow] (x4)	
				(y2) edge [midarrow] (x5)
				
				(y3) edge [midarrow] (x2)
				(y3) edge [midarrow] (x1)	
				(y3) edge [midarrow] (x4)	
				(y3) edge [midarrow] (x5)	
				
				(y4) edge [midarrow] (x2)
				(y4) edge [midarrow] (x3)	
				(y4) edge [midarrow] (x1)	
				(y4) edge [midarrow] (x5)
				
				(y5) edge [midarrow] (x2)
				(y5) edge [midarrow] (x3)	
				(y5) edge [midarrow] (x4)	
				(y5) edge [midarrow] (x1)

				;

			\end{tikzpicture}
			\caption{$B_{6,2}$}
			\label{fig:B62}
		\end{subfigure}
		\caption{The graphs $A_6$ and $B_{6,2}$}
		\label{fig:A6B62}
	\end{figure}

	Now let $X$ be covered by $x$ and $s = 0$.  By Lemma~\ref{inneighboursinX}, $x \rightarrow x_i$ for $1 \leq i \leq r-1$.  By reasoning similar to the previous case, $y$ and $z$ can have no out-neighbours in $Y$.  Let the resulting digraph in which $y$ has $t$ in-neighbours in $Y$ be denoted by $B_{r,t}$ for $0 \leq t \leq r-1$ (see Figure~\ref{fig:B62}). Each $B_{r,t}$ is a $2$-geodetic extremal digraph.

	The case of $Y$ being covered by one vertex of $T$ is symmetric.  In particular we shall denote the converse of $B_{r,t}$ by $B'_{r,t}$.  We have $B'_{r,0} \cong B_{r,0}$ and $B'_{r,r-1} \cong B_{r,r-1}$, but otherwise these digraphs are pairwise non-isomorphic. 
	
	We now turn to the case that there is a bad vertex; say $z$ is bad.  Hence $d(z) = 4$ in $H$.  It follows by Lemma~\ref{badvertices} that $x$ and $y$ each have $r-2$ neighbours in $K_{r-1,r-1}$ and each is connected to just one partite set.
	
	\begin{lemma}\label{badvertexstructure}
		If $z$ is bad, then $s \leq 2$.  If $z$ is joined to a source in $Y$, then $X$ is covered by $\{ y,z\}$ and $Y$ is covered by $\{ x,z\}$.  Likewise, if $z$ is joined to a sink in $X$, then $X$ is covered by $\{ x,z\}$  and $Y$ is covered by $\{ y,z\}$. If $s = 2$, then $z$ is connected to a source in $Y$ and a sink in $X$.
	\end{lemma}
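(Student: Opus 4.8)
The plan is to combine three ingredients: the arc directions forced by the absence of sources and sinks, a conflict analysis of the length-two walks passing through $z$, and a counting argument resting on Lemma~\ref{inneighboursinX}. By Lemma~\ref{badvertices} the bad vertex $z$ has degree four in $H$, so it has exactly one neighbour $x_a\in X$ and one neighbour $y_b\in Y$, while (as already noted) each of $x,y$ is joined to a single partite set, covering $r-2\geq 3$ of its vertices. I would first record the forced orientations: since every vertex of $H-T$ meets $T$ in a unique vertex, a source $y_b$ of $G-T$ can acquire an in-arc only from $z$, so $z\rightarrow y_b$; dually a sink $x_a$ can acquire an out-arc only through $z$, so $x_a\rightarrow z$.

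Next I would determine which partite set each of $x$ and $y$ covers. Suppose $z$ is joined to a source $y_b\in Y$, so $z\rightarrow y_b$. As a source of $G-T$, $y_b$ has an arc to every vertex of $X$, yielding length-two walks $z\rightarrow y_b\rightarrow x_j$ for every $j$. Since $z\rightarrow x$ from the triangle, any out-neighbour of $x$ lying in $X$ would create a second length-two walk from $z$ to that vertex, contradicting $2$-geodecity; hence $x$ has no out-neighbour in $X$. As Lemma~\ref{inneighboursinX} permits $x$ at most one in-neighbour in $X$ while $x$ covers $r-2\geq 3$ vertices of its partite set, $x$ cannot be joined to $X$, so it covers a subset of $Y$, forcing $y$ to cover a subset of $X$; thus $Y$ is covered by $\{x,z\}$ and $X$ by $\{y,z\}$. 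The dual argument handles a sink $x_a\in X$: here $x_a\rightarrow z$ and $x_a$ receives an arc from every vertex of $Y$, so the walks $y_j\rightarrow x_a\rightarrow z$ together with $y\rightarrow z$ forbid $y$ any in-neighbour in $Y$, and the symmetric form of Lemma~\ref{inneighboursinX} then pins down the covering analogously.

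For the bound $s\leq 2$ I would count the sources of $G-T$, all of which lie in $Y$ by the fixed orientation. Writing $w\in\{x,y\}$ for the vertex of $T$ covering $Y\setminus\{y_b\}$, each source in $Y\setminus\{y_b\}$ needs an in-arc from its unique $T$-neighbour $w$, hence is an out-neighbour of $w$; by Lemma~\ref{inneighboursinX} there is at most one such out-neighbour, so $w$ serves at most one source, while $y_b$ is served only by the single arc $z\rightarrow y_b$. Thus there are at most two sources, giving $s\leq 2$, and the count of sinks in $X$ gives the same bound symmetrically. If $s=2$, then both slots in the source count must be filled, so $y_b$ is itself a source; running the dual count over the two sinks shows $x_a$ is a sink. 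Hence $z$ is joined to a source in $Y$ and to a sink in $X$.

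The step I expect to be most delicate is the covering determination, where one must use the orientation $x\rightarrow y\rightarrow z\rightarrow x$ to decide whether to examine the length-two walks leaving $z$ (in the source case) or those entering $z$ (in the sink case), and then correctly identify which of $x,y$ is barred from its neighbouring partite set; it is easy to transpose the two configurations here. Once the covering is fixed, the bound $s\leq 2$ and the refinement for $s=2$ follow cleanly from the degree-one restrictions of Lemma~\ref{inneighboursinX}.
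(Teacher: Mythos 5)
Your argument follows the paper's proof essentially step for step: the bound $s \leq 2$ and the $s=2$ refinement come from the same counting of sources (and, dually, sinks) against the one-out-neighbour/one-in-neighbour restrictions of Lemma~\ref{inneighboursinX}, and the source-case covering rests on the same conflict between the $2$-paths $z \rightarrow y_b \rightarrow x_i$ and $z \rightarrow x \rightarrow x_i$. The paper phrases the covering step as a contradiction where you argue contrapositively, which is immaterial, and it dispatches the sink case with the single remark that it is ``symmetric'' where you spell the dual argument out.

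The one substantive point concerns exactly the step you flagged as delicate, and here your explicit derivation is more informative than the paper's. In the sink case your chain --- $x_a \rightarrow z$ forced, the walks $y_j \rightarrow x_a \rightarrow z$ together with $y \rightarrow z$ barring $y$ from any in-neighbour in $Y$, then Lemma~\ref{inneighboursinX} barring $y$ from covering $Y$ --- forces $X$ to be covered by $\{y,z\}$ and $Y$ by $\{x,z\}$, i.e.\ the \emph{same} covering as in the source case, not the transposed covering $X$ by $\{x,z\}$ and $Y$ by $\{y,z\}$ asserted in the statement. Your conclusion is the correct one: the symmetry in play is the converse operation, which swaps $X$ with $Y$ \emph{and} $x$ with $y$ simultaneously, so the sink case maps back to the source case and returns the identical covering; the printed clause appears to have swapped only the vertex labels. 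The paper's own later usage confirms this: in the $s=1$ analysis, where $z$ is joined to both the sink $x_{r-1}$ and the source $y_{r-1}$, it concludes that $X$ is covered by $\{y,z\}$ and $Y$ by $\{x,z\}$, and the resulting digraph $C_r$ exists --- which would be impossible if the printed sink clause were true, since the two clauses would then contradict each other in that case. So do not leave your sink-case conclusion as ``pins down the covering analogously'': state it explicitly, and note that it corrects a transposition in the statement rather than reproducing it. Apart from making that conclusion explicit, your proof is complete and sound.
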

	\begin{proof}
		Suppose that $s \geq 3$. The bad vertex $z$ is adjacent to one vertex of $Y$ in $H-T$, so the vertex of $T$ that also has edges to $Y$ must have arcs to two or more sources in $Y$, violating Lemma~\ref{inneighboursinX}. This reasoning also demonstrates that if $s = 2$, then $z$ is connected to a source in $Y$ and a sink in $X$. 	
		
		For any $s \leq 2$, suppose that $z$ is joined to a source in $Y$.  Suppose that $X$ is covered by $\{ x,z\}$. Then $z$ has a 2-path to every vertex of $X$ via the source, but by Lemma~\ref{inneighboursinX} $x$ has an out-neighbour $x_i \in X$, so there will also be a 2-path from $z$ to $x_i$ via $x$, violating 2-geodecity. Hence $X$ must be covered by $\{ y,z\}$ and hence $Y$ is covered by $\{ x,z\}$. The other statement is symmetric to this one.\end{proof}
	
	Let $s = 2$. The sources in $G-T$ are $y_{r-2}$ and $y_{r-1}$ and the sinks are $x_{r-2}$ and $x_{r-1}$.  By Lemma~\ref{badvertexstructure} we can assume that $z \rightarrow y_{r-2}$ and $x_{r-2} \rightarrow z$.  Also by Lemma~\ref{badvertexstructure} $X$ is covered by $\{ y,z\} $.  There must be an arc from $x_{r-1}$ to $y$ so that $x_{r-1}$ is not a sink in $G$ and $y \rightarrow x_i$ for $1 \leq i \leq r-3$ by Lemma~\ref{inneighboursinX}.  Likewise there is an arc from $x$ to $y_{r-1}$.  However, we now have two 2-paths from $x$ to the vertices in $\{ x_1, \dots, x_{r-3}\} $, one via $y$ and the other via $y_{r-1}$, a contradiction.  It follows that $s \leq 1$.
	
	Let $s=1$. The sink and source of $G-T$ are $x_{r-1}$ and $y_{r-1}$ respectively (Figure~\ref{fig:C6}).  Suppose that $z$ is joined to $x_{r-1}$ and $y_{r-1}$.  By Lemma~\ref{badvertexstructure} $X$ is covered by $\{ y,z\} $ and $Y$ is covered by $\{ x,z\} $.  By Lemma~\ref{inneighboursinX} $y \rightarrow x_i$ for $1 \leq i \leq r-2$ and  $y_i \rightarrow x$ for $1 \leq i \leq r-2$.
	This gives the single solution $C_r$, an example of which is shown in Figure~\ref{fig:C6}. Note that the digraph $C_r$ is isomorphic to its converse.

	\begin{figure}\centering
		\begin{subfigure}[b]{0.4\textwidth}\centering
			\begin{tikzpicture}[midarrow=stealth,x=0.2mm,y=-0.2mm,inner sep=0.1mm,scale=1.6,
				thick,vertex/.style={circle,draw,minimum size=10,font=\tiny,fill=white},edge label/.style={fill=white}]
				\tiny
				
				\node at (80,-90) [vertex] (y) {$y$};
				\node at (80,90) [vertex] (x) {$x$};
				\node at (-60,0) [vertex] (z) {$z$};	
				
				\node at (0,-45) [vertex] (x1) {$x_5$};
				\node at (30,-45) [vertex] (x2) {$x_1$};
				\node at (60,-45) [vertex] (x3) {$x_2$};
				\node at (90,-45) [vertex] (x4) {$x_3$};
				\node at (120,-45) [vertex] (x5) {$x_4$};					
				\node at (0,45) [vertex] (y1) {$y_5$};
				\node at (30,45) [vertex] (y2) {$y_1$};
				\node at (60,45) [vertex] (y3) {$y_2$};
				\node at (90,45) [vertex] (y4) {$y_3$};
				\node at (120,45) [vertex] (y5) {$y_4$};		
				
				\path
				
				(z) edge [midarrow] (y1)
				(x1) edge [midarrow] (z)
				
				(x) edge [midarrow, bend right=90] (y)
				(y) edge [midarrow, bend right] (z)
				(z) edge [midarrow, bend right] (x)

				(y2) edge [midarrow] (x)
				(y3) edge [midarrow] (x)
				(y4) edge [midarrow] (x)
				(y5) edge [midarrow] (x)

				(y) edge [midarrow] (x2)
				(y) edge [midarrow] (x3)
				(y) edge [midarrow] (x4)
				(y) edge [midarrow] (x5)	
				
				(y1) edge [midarrow] (x1)
				
				(x2) edge [midarrow] (y2)
				(x3) edge [midarrow] (y3)
				(x4) edge [midarrow] (y4)
				(x5) edge [midarrow] (y5)	
				
				(y1) edge [midarrow] (x2)
				(y1) edge [midarrow] (x3)	
				(y1) edge [midarrow] (x4)	
				(y1) edge [midarrow] (x5)	
				
				(y2) edge [midarrow] (x1)
				(y2) edge [midarrow] (x3)	
				(y2) edge [midarrow] (x4)	
				(y2) edge [midarrow] (x5)
				
				(y3) edge [midarrow] (x2)
				(y3) edge [midarrow] (x1)	
				(y3) edge [midarrow] (x4)	
				(y3) edge [midarrow] (x5)	
				
				(y4) edge [midarrow] (x2)
				(y4) edge [midarrow] (x3)	
				(y4) edge [midarrow] (x1)	
				(y4) edge [midarrow] (x5)
				
				(y5) edge [midarrow] (x2)
				(y5) edge [midarrow] (x3)	
				(y5) edge [midarrow] (x4)	
				(y5) edge [midarrow] (x1)

				;

			\end{tikzpicture}
			\caption{$C_6$}
			\label{fig:C6}
		\end{subfigure}
		\quad
		\begin{subfigure}[b]{0.4\textwidth}\centering
			\begin{tikzpicture}[midarrow=stealth,x=0.2mm,y=-0.2mm,inner sep=0.1mm,scale=1.6,
				thick,vertex/.style={circle,draw,minimum size=10,font=\tiny,fill=white},edge label/.style={fill=white}]
				\tiny
				
				\node at (80,-90) [vertex] (y) {$y$};
				\node at (80,90) [vertex] (x) {$x$};
				\node at (-60,0) [vertex] (z) {$z$};	
				
				\node at (0,-45) [vertex] (x1) {$x_1$};
				\node at (30,-45) [vertex] (x2) {$x_2$};
				\node at (60,-45) [vertex] (x3) {$x_3$};
				\node at (90,-45) [vertex] (x4) {$x_4$};
				\node at (120,-45) [vertex] (x5) {$x_5$};					
				\node at (0,45) [vertex] (y1) {$y_1$};
				\node at (30,45) [vertex] (y2) {$y_2$};
				\node at (60,45) [vertex] (y3) {$y_3$};
				\node at (90,45) [vertex] (y4) {$y_4$};
				\node at (120,45) [vertex] (y5) {$y_5$};		
				
				\path
				
				(z) edge [midarrow] (x1)
				(y1) edge [midarrow] (z)
				
				(x) edge [midarrow, bend right=90] (y)
				(y) edge [midarrow, bend right] (z)
				(z) edge [midarrow, bend right] (x)

				(y2) edge [midarrow] (x)
				(y3) edge [midarrow] (x)
				(y4) edge [midarrow] (x)
				(y5) edge [midarrow] (x)

				(y) edge [midarrow] (x2)
				(y) edge [midarrow] (x3)
				(y) edge [midarrow] (x4)
				(y) edge [midarrow] (x5)	
				
				(x1) edge [midarrow] (y1)
				
				(x2) edge [midarrow] (y2)
				(x3) edge [midarrow] (y3)
				(x4) edge [midarrow] (y4)
				(x5) edge [midarrow] (y5)	
				
				(y1) edge [midarrow] (x2)
				(y1) edge [midarrow] (x3)	
				(y1) edge [midarrow] (x4)	
				(y1) edge [midarrow] (x5)	
				
				(y2) edge [midarrow] (x1)
				(y2) edge [midarrow] (x3)	
				(y2) edge [midarrow] (x4)	
				(y2) edge [midarrow] (x5)
				
				(y3) edge [midarrow] (x2)
				(y3) edge [midarrow] (x1)	
				(y3) edge [midarrow] (x4)	
				(y3) edge [midarrow] (x5)	
				
				(y4) edge [midarrow] (x2)
				(y4) edge [midarrow] (x3)	
				(y4) edge [midarrow] (x1)	
				(y4) edge [midarrow] (x5)
				
				(y5) edge [midarrow] (x2)
				(y5) edge [midarrow] (x3)	
				(y5) edge [midarrow] (x4)	
				(y5) edge [midarrow] (x1)

				;

			\end{tikzpicture}
			\caption{$D_6$}
			\label{fig:D6}
		\end{subfigure}
		\caption{The digraphs $C_6$ and $D_6$}
		\label{fig:C6D6}
	\end{figure}
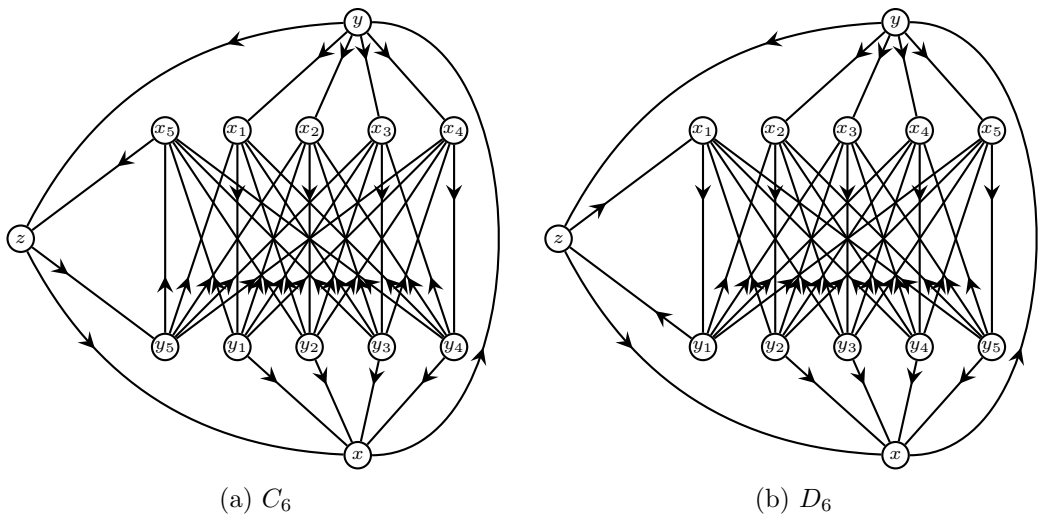
	
	Suppose that $z$ is joined to the source $y_{r-1}$ but is not joined to the sink $x_{r-1}$ of $G-T$; say $z$ has an edge to $x_{r-2}$ in $H-T$.  By Lemma~\ref{badvertexstructure} $X$ is covered by $\{ y,z\} $ and $Y$ is covered by $\{ x,z\} $.  Hence there is an arc $x_{r-1} \rightarrow y$ and by Lemma~\ref{inneighboursinX} $x$ has at most one out-neighbour in $Y-y_{r-1}$, so that there is a vertex $y_i$ with $y_i \rightarrow x$. Hence there would be paths $y_i \rightarrow x \rightarrow y$ and $y_i \rightarrow x_{r-1} \rightarrow y$ in $G$, a contradiction.  We will get a similar contradiction if $z$ is joined to the sink $x_{r-1}$ in $X$, but not to the source $y_{r-1}$ in $Y$.

	Finally let $z$ be joined to $x_i$ and $y_j$ where $1 \leq i,j \leq r-2$. Suppose that $X$ is covered by $\{ x,z\} $ and $Y$ by $\{ y,z\} $.  If $i=j$, then the triangle is oriented as $x_i \rightarrow y_i \rightarrow z \rightarrow x_i$; however this yields paths $y_i \rightarrow z \rightarrow x$ and $y_i \rightarrow x_{r-1} x$, so we must have $i \not = j$. Without loss of generality we can set $i = r-2$ and $j = r-3$. The triangle is now oriented as $y_{r-3} \rightarrow x_{r-2} \rightarrow z \rightarrow y_{r-3}$.  There is an arc $x_{r-1} \rightarrow x$ so by Lemma~\ref{inneighboursinX} there are arcs $x \rightarrow x_l$ for $1 \leq l \leq r-3$. In this case we would have paths $z \rightarrow y_{r-3} \rightarrow x_1$ and $z \rightarrow x \rightarrow x_1$.
	
	Hence we can assume that $X$ is covered by $\{ y,z\} $ and $Y$ by $\{ x,z\} $.  By Lemma~\ref{inneighboursinX} $y$ has at least two out-neighbours in $X$, so if $x$ has any out-neighbour in $Y$ then there would be more than one 2-path from $x$ to an out-neighbour of $y$ in $Y$.  In particular we must have $z \rightarrow y_{r-1}$, a case that we have already considered.

	Now we can set $s = 0$.  Suppose that $z$ is joined to $x_1$ and $y_2$.  As $y_2 \rightarrow x_1$, we must orient the triangle $z,x_1,y_2$ as $z \rightarrow y_2 \rightarrow x_1 \rightarrow z$.  If $X$ is covered by $\{ x,z\} $ and $Y$ is covered by $\{ y,z\} $, then by Lemma~\ref{inneighboursinX} $x \rightarrow x_i$ for $2 \leq i \leq r-1$ and so there would be paths $z \rightarrow y_2 \rightarrow x_3$ and $z \rightarrow x \rightarrow x_3$.  Hence $X$ must be covered by $\{ y,z\} $ and $Y$ must be covered by $\{ x,z\} $.  By Lemma~\ref{inneighboursinX} we have $y_1 \rightarrow x$.  Hence there are paths $x_1 \rightarrow y_1 \rightarrow x$ and $x_1 \rightarrow z \rightarrow x$.
	
	Therefore we can assume that $z$ is joined to $x_1$ and $y_1$.  We must have $z \rightarrow x_1$ and $y_1 \rightarrow z$.  If $X$ is covered by $\{ y,z\} $ and $Y$ by $\{ x,z\} $ then by Lemma~\ref{inneighboursinX} $y \rightarrow x_i$ and $y_i \rightarrow x$ for $2 \leq i \leq r-1$.  This yields the solution $D_r$ shown in Figure~\ref{fig:D6}.  $D_r$ is isomorphic to its converse. If $X$ is covered by $\{ x,z\} $ and $Y$ by $\{ y,z\} $ then by a suitable redrawing of the digraph it can be seen that we obtain a solution isomorphic to $C_r$ in Figure~\ref{fig:C6}.
	
	This completes our classification of the strongly-connected $2$-geodetic digraphs with order $n = 2r+1$ and size $r^2+2$. We therefore have the following theorem.

	\begin{theorem}\label{classificationtheorem}
		If $G$ is a 2-geodetic digraph with order $n = 2r+1 \geq 11$, size $m = r^2+2$ and no sources or sinks, then $G$ is either isomorphic to one of $A_r, B_{r,0},B_{r,r-1},C_r$ or $D_r$ or is isomorphic to a member of the family $B_{r,t},B'_{r,t}$ for some $1 \leq t \leq r-2$.  The digraphs in this list are pairwise non-isomorphic and so there are $2r+1$ distinct solutions up to isomorphism.  
	\end{theorem}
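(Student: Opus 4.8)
The plan is to read the statement as two separate claims and prove each in turn: the \emph{completeness} claim, that every such $G$ is isomorphic to a digraph on the given list, and the \emph{irredundancy} claim, that no two digraphs on the list are isomorphic, so that the count is exactly $2r+1$. The completeness half requires no new work: it is obtained by collecting the outcome of the case analysis already carried out in this section. Recall that by Theorem~\ref{thm:2_Geo_Ext_Number} the digraph $G$ contains a directed triangle $T = x\to y\to z\to x$ with every vertex of $H-T$ adjacent to exactly one of $x,y,z$, and $G-T$ is one of the $r$ extremal orientations of $K_{r-1,r-1}$. Splitting on whether $T$ has a bad vertex, the preceding arguments force $G$ to be $A_r$, some $B_{r,t}$ or $B'_{r,t}$ with $0\le t\le r-1$, $C_r$, or $D_r$. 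Invoking the already-noted isomorphisms $B'_{r,0}\cong B_{r,0}$ and $B'_{r,r-1}\cong B_{r,r-1}$, the redundant members of the $B'$ family are discarded, leaving the canonical list consisting of $A_r$, the digraphs $B_{r,t}$ with $0\le t\le r-1$, the digraphs $B'_{r,t}$ with $1\le t\le r-2$, together with $C_r$ and $D_r$.

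The substance of the proof is the irredundancy, which I would establish with a short hierarchy of isomorphism invariants. Since each of these digraphs is $K_4^-$-free, every triangle of the underlying graph is a directed $3$-cycle; and since $G-T$ is bipartite while each of its vertices meets $T$ in a single vertex, the only possible triangles are $T$ itself and, when $T$ has a bad vertex $z$ of degree $4$, the triangle on $z$ and its two neighbours in $G-T$. Hence the \emph{number of directed triangles} is $2$ for $C_r$ and $D_r$ and is $1$ for all the others, which isolates the pair $\{C_r,D_r\}$. Among the remaining one-triangle digraphs I would then compute the multiset of degree pairs $(d^+(v),d^-(v))$: a direct calculation shows that $A_r$ possesses a vertex $v$ with $d^+(v)=r-1$ and $d^-(v)=2$, a combination occurring in no $B_{r,t}$ or $B'_{r,t}$, so $A_r$ is separated from the entire $B$/$B'$ collection.

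It then remains to separate the members within the $B$-family, within the $B'$-family, and within $\{C_r,D_r\}$. For the $B$-family I would use that $x$ is the unique vertex of out-degree $r$ and that $T$ is the unique triangle, so the out-neighbour of $x$ along $T$ is canonically defined; in $B_{r,t}$ this vertex is $y$, whose in-degree is exactly $t+1$. Since $t\mapsto t+1$ is injective, the digraphs $B_{r,t}$ for $0\le t\le r-1$ are pairwise non-isomorphic, and applying the same invariant to converses gives pairwise non-isomorphism of the $B'_{r,t}$. To separate the two families I would note that for $1\le t\le r-2$ the maximum out-degree in $B'_{r,t}$ is $r-1$, whereas every $B_{r,t''}$ contains a vertex of out-degree $r$; hence no such $B'_{r,t}$ is isomorphic to any $B_{r,t''}$, the excluded values $t=0,r-1$ being precisely the coincidences already removed.

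Finally, $C_r$ and $D_r$ share both their degree sequence and their triangle count, so this is the step I expect to be the main obstacle: they must be told apart by a genuinely local invariant rather than by a global count. Here I would use the unique vertex $z$ of type $(2,2)$ and examine the degree types of its two in-neighbours. In $D_r$ both in-neighbours are of type $(r-1,1)$, whereas in $C_r$ one is of type $(r-1,1)$ and the other of type $(1,r-1)$; hence $C_r\not\cong D_r$. Assembling the five groups yields $1 + r + (r-2) + 1 + 1 = 2r+1$ pairwise non-isomorphic digraphs, which completes the proof.
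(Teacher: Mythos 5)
Your proposal is correct, and it splits into a half that coincides with the paper and a half that genuinely goes beyond it. For completeness you do exactly what the paper does: the paper's proof of this theorem \emph{is} the case analysis of Section~\ref{section:classification} (split on whether the triangle $T$ guaranteed by Theorem~\ref{thm:2_Geo_Ext_Number} has a bad vertex, powered by Lemmas~\ref{badvertices}--\ref{inneighboursinX}), and the theorem is stated as its summary, so deferring to that analysis matches the source. Where you depart is the irredundancy claim: the paper never actually proves that the $2r+1$ digraphs are pairwise non-isomorphic --- it records $B'_{r,0}\cong B_{r,0}$, $B'_{r,r-1}\cong B_{r,r-1}$ and the self-converse observations for $A_r$, $C_r$, $D_r$, and asserts the rest --- whereas you supply an explicit hierarchy of invariants, and each one checks out against the constructions. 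The triangle count is $2$ for $C_r$ and $D_r$ (the second triangle running through the bad vertex $z$ and its two matched neighbours in $K_{r-1,r-1}$) and $1$ otherwise; $A_r$ contains the vertex $x$ of type $(d^+,d^-)=(r-1,2)$, while the types occurring in $B_{r,t}$ are $(r,1)$, $(1,t+1)$, $(1,r-t)$, $(1,r-1)$, $(r-1,1)$ (and their reversals in $B'_{r,t}$), none of which equals $(r-1,2)$ for $r\ge 5$; in $B_{r,t}$ the vertex $x$ is indeed the unique vertex of out-degree $r$, its successor on the unique triangle is $y$, and $d^-(y)=t+1$, which separates the family internally and, via converses, the $B'$ family too; for $1\le t\le r-2$ the maximum out-degree of $B'_{r,t}$ is $r-1<r$, separating it from every $B_{r,t''}$; and the in-neighbours of the unique $(2,2)$-vertex $z$ have types $\{(r-1,1),(1,r-1)\}$ in $C_r$ but $\{(r-1,1),(r-1,1)\}$ in $D_r$, so $C_r\not\cong D_r$. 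All of these computations require $r\ge 5$ (e.g.\ so that $(2,2)$ cannot collide with $(1,r-1)$ or $(r-1,1)$), which the hypothesis $n\ge 11$ supplies. What your route buys is that the theorem's final sentence --- pairwise non-isomorphism and the count $1+r+(r-2)+2=2r+1$ --- is actually proved rather than left to inspection, at essentially no extra cost, since the invariants are elementary; it would be a worthwhile addition to the paper's exposition.
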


	\section{Generalised Tur\'{a}n problems for $k$-geodetic digraphs}\label{section:generalised Turan}
	
	Recently the following extension of Tur\'{a}n's problem has received a great deal of attention: given graphs $T$ and $H$ what is the largest possible number of copies of $T$ in an $H$-free graph with order $n$?  Erd\H{o}s considered this problem in 1962~\cite{Erd3} when $T$ and $H$ are complete graphs. The largest number of 5-cycles in a triangle-free graph was treated in~\cite{Grz,HatHlaKraNorRaz} and the converse problem of the largest number of triangles in a graph without a given odd cycle $C_{2k+1}$ is discussed in~\cite{BolGyo,GyoLi}. The problem was considered in greater generality in~\cite{AlonShik}. To investigate this problem in digraphs we define the following notation.
	
	\begin{definition}
		For any digraph $Z$ and $k \geq 2$ we denote the largest number of copies of $Z$ in a $k$-geodetic digraph by $\ex(n;Z;k)$.	
	\end{definition}
	Observe that if $Z$ is a directed arc then  $\ex(n;Z;k)=\ex(n;k)$. We will study the asymptotics of the function $\ex(n;Z;k)$ in the cases that $Z$ is a directed $(k+1)$-cycle or a directed path. We begin with the function $\ex(n;C_{k+1};k)$, where $k \geq 2$ and $C_{k+1}$ is a directed $(k+1)$-cycle. Earlier we made use of the fact that any arc in a $2$-geodetic digraph is contained in at most one triangle; a similar principle applies for larger $k$.

	\begin{lemma}
		Every arc in a $k$-geodetic digraph is contained in at most one directed $(k+1)$-cycle.
	\end{lemma}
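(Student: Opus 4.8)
The plan is to argue by contradiction, mirroring the counting principle used implicitly in Lemma~\ref{diamondfree} for the case $k=2$. Suppose some arc $u \rightarrow v$ of a $k$-geodetic digraph $G$ lies in two distinct directed $(k+1)$-cycles. A directed $(k+1)$-cycle through $u \rightarrow v$ consists of the arc $u \rightarrow v$ together with a directed path of length $k$ from $v$ back to $u$. So two distinct such cycles sharing the arc $u \rightarrow v$ give two distinct $v,u$-walks of length exactly $k$.

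The key observation is then immediate: both of these are $v,u$-walks of length $k \leq k$, and they are distinct (since the two cycles differ and they share the arc $u \rightarrow v$, they must differ somewhere along the $v$-to-$u$ portion). This directly violates the defining property of $k$-geodecity, which forbids two distinct $u',v'$-walks of length at most $k$ for any ordered pair $(u',v')$ — here applied to the ordered pair $(v,u)$.

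First I would set up the correspondence between directed $(k+1)$-cycles through a fixed arc $u \rightarrow v$ and directed $v,u$-paths of length $k$, noting that each cycle $u \rightarrow v \rightarrow x_1 \rightarrow \cdots \rightarrow x_{k-1} \rightarrow u$ is determined by its path-segment $v \, x_1 \cdots x_{k-1} \, u$. Then I would observe that distinct cycles yield distinct such paths, hence distinct $v,u$-walks of length $k$. Invoking $k$-geodecity on the pair $(v,u)$ completes the argument.

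I do not anticipate a genuine obstacle here; the statement is essentially a reformulation of the definition of $k$-geodecity once one views a $(k+1)$-cycle through an arc as an arc plus a return path of length $k$. The only point requiring a word of care is confirming that the two cycles, being distinct yet sharing the arc $u \rightarrow v$, genuinely produce \emph{distinct} return paths of length $k$ — but this is clear, since the cycles agree on the shared arc and can differ only along the remaining vertices, which is exactly where the two length-$k$ walks then differ.
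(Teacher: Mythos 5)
Your proof is correct and is essentially the same argument as the paper's: you decompose each directed $(k+1)$-cycle through the arc $u \rightarrow v$ into that arc plus a $v,u$-path of length $k$, so two distinct cycles yield two distinct $v,u$-paths of length $k$, contradicting $k$-geodecity. The paper states this in two lines; your extra care in checking that distinct cycles give distinct return paths is fine but not a departure in method.
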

	\begin{proof}
		Suppose that an arc $xy$ is contained in two distinct $(k+1)$-cycles.  Then $y$ has distinct paths of length $k$ to $x$, violating $k$-geodecity.
	\end{proof}
	
	We now utilise an inductive approach to give an upper bound on the number of directed $(k+1)$-cycles in a $k$-geodetic digraph.
	
	\begin{lemma}\label{rootnoutdegree}
		Every $k$-geodetic digraph with order $n$ contains a vertex with out-degree $\leq n^{1/k}$.
	\end{lemma}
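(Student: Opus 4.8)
The plan is to recognise this statement as the directed Moore bound applied to the vertex of minimum out-degree. Set $d=\min_{v\in V(G)}d^+(v)$; it then suffices to prove $d\le n^{1/k}$, since a vertex attaining this minimum has out-degree $\le n^{1/k}$. If $G$ has a sink then $d=0$ and we are done, so I assume $d\ge 1$. Fix an arbitrary vertex $u$ and, for $0\le t\le k$, write $S_t=N^{+t}(u)$ for the layer of vertices at distance exactly $t$ from $u$; these layers are pairwise disjoint since the distance from $u$ is well defined.

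The heart of the argument is to show that $k$-geodecity forces each layer to grow by a factor of at least $d$. For each $0\le t\le k-1$ I would establish two facts: (a) every out-neighbour of a vertex $w\in S_t$ lies in $S_{t+1}$, and (b) distinct vertices of $S_t$ have disjoint out-neighbourhoods. For (a), if $w\to x$ with $w\in S_t$, then appending the arc $w\to x$ to a shortest $u,w$-path produces a $u,x$-walk of length $t+1\le k$; were $x$ at distance $\le t$ from $u$ (the case $x=u$ included, giving a short closed walk), the corresponding shorter walk would be a second distinct $u,x$-walk of length at most $k$, contradicting $k$-geodecity. For (b), a common out-neighbour $x$ of distinct $w,w'\in S_t$ would yield two distinct length-$(t+1)$ walks from $u$ to $x$, again violating $k$-geodecity. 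Together (a) and (b) give $|S_{t+1}|=\sum_{w\in S_t}d^+(w)\ge d\,|S_t|$.

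Starting from $|S_0|=1$ and iterating this inequality yields $|S_k|\ge d^k$. As $S_0,\dots,S_k$ are disjoint subsets of $V(G)$ we obtain $n\ge d^k$ (in fact $n\ge 1+d+\cdots+d^k=M(d,k)$, the directed Moore bound recorded earlier via~\cite{MilSir}), and hence $d\le n^{1/k}$, as required. Alternatively, one may skip the layer analysis entirely and invoke $M(d,k)$ directly, since it was already quoted in the discussion of the out-regular case. The only delicate point is the layered expansion in (a)--(b): one must check that $k$-geodecity genuinely excludes \emph{every} back-arc and cross-arc among the first $k$ layers, paying attention to the boundary index $t=k-1$ (where the constructed walk still has length $\le k$) and to the degenerate possibilities $x=u$ and $d=0$. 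Once this is in hand the remaining induction and disjointness count are routine.
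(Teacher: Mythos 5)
Your proof is correct and is essentially the paper's argument: the paper proves the lemma by the one-line contrapositive (if every out-degree exceeded $n^{1/k}$, then $|N^{+k}(x)|$ would be at least $n$, a contradiction), which rests on exactly the layer-expansion fact your claims (a) and (b) make explicit. Your closing remark is also apt, since the paper itself derives $n \geq M(d,k) \geq d^k$ and hence $d \leq n^{1/k}$ via~\cite{MilSir} in the out-regular theorem, so both of your routes match arguments already present in the text.
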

	\begin{proof}
		Assume the contrary.  Then for any vertex $x$ the set $N^{+k}(x)$ will contain at least $n$ vertices, a contradiction. 
	\end{proof}

	\begin{theorem}
		\[ \ex(n;C_{k+1};k) \leq \sum_{i=1}^{n}i^{1/k} =  \frac{k}{k+1}n^{\frac{k+1}{k}} + O(n^{\frac{1}{k}}).\]
	\end{theorem}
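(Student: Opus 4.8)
The plan is to establish the bound by induction on $n$, using Lemma~\ref{rootnoutdegree} to peel off a single well-chosen vertex at each step. First I would set $g(n) := ex(n;C_{k+1};k)$ and seek the recursive inequality $g(n) \leq g(n-1) + n^{1/k}$. To obtain this, let $G$ be a $k$-geodetic digraph on $n$ vertices achieving the maximum number of directed $(k+1)$-cycles. By Lemma~\ref{rootnoutdegree}, $G$ contains a vertex $v$ with $d^+(v) \leq n^{1/k}$. The key observation is to bound the number of directed $(k+1)$-cycles through $v$: since each such cycle uses exactly one out-arc $v \rightarrow w$ of $v$, and by the preceding lemma each arc $v \rightarrow w$ lies in at most one $(k+1)$-cycle, the number of $(k+1)$-cycles containing $v$ is at most $d^+(v) \leq n^{1/k}$. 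Deleting $v$ yields a $k$-geodetic digraph on $n-1$ vertices (every subdigraph of a $k$-geodetic digraph is $k$-geodetic), which contains at most $g(n-1)$ cycles, so $g(n) \leq g(n-1) + n^{1/k}$.

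Iterating this recursion down from $n$ gives $g(n) \leq \sum_{i=1}^{n} i^{1/k}$, matching the summation in the statement. (One must check the base case: for small $i$ the term $i^{1/k}$ comfortably dominates the handful of possible cycles, and in particular $g(i) = 0$ whenever $i \leq k$ since a $(k+1)$-cycle needs $k+1$ vertices, so the bound holds trivially at the bottom of the induction.) It then remains to evaluate the sum asymptotically. For this I would compare the sum to the integral $\int_0^n x^{1/k}\,dx = \frac{k}{k+1} n^{(k+1)/k}$, using the standard estimate that $\sum_{i=1}^n i^{1/k} = \frac{k}{k+1} n^{(k+1)/k} + O(n^{1/k})$; this follows from Euler--Maclaurin summation, or more elementarily by bounding the sum between $\int_0^n x^{1/k}\,dx$ and $\int_1^{n+1} x^{1/k}\,dx$ and observing that the error from the leading term is of order $n^{1/k}$ (the derivative of the summand at the endpoint).

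The main obstacle I anticipate is not the induction itself, which is clean, but making the asymptotic error term genuinely $O(n^{1/k})$ rather than a weaker bound. A naive integral comparison gives an error of order $n^{1/k}$ only if one is careful about the boundary correction terms; the second-order Euler--Maclaurin term here is proportional to the endpoint value of the summand, namely $n^{1/k}$, which is exactly the claimed error order, so the estimate is tight in the right sense. I would verify that no larger error sneaks in from the lower end of the range, where the summand is $O(1)$ and contributes only an additive constant absorbed into the $O(n^{1/k})$ term. With these two ingredients—the per-vertex cycle bound driving the recursion and the integral estimate for the resulting sum—the stated inequality follows directly.
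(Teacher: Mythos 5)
Your proof is correct and follows essentially the same route as the paper: induct on $n$, use Lemma~\ref{rootnoutdegree} to find a vertex of out-degree at most $n^{1/k}$, bound the cycles destroyed by its deletion, and estimate the resulting sum. In fact your counting is slightly cleaner than the paper's, which first passes to an extremal digraph of minimum size to argue every arc lies in \emph{exactly} one $(k+1)$-cycle and that $d^-(x)=d^+(x)$; you observe, correctly, that the ``at most one cycle per arc'' lemma applied to the out-arcs of $v$ already gives the needed bound of $d^+(v)$ cycles through $v$, making the minimality step unnecessary.
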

	\begin{proof}
		We claim that 
		\[ \ex(n;C_{k+1};k) \leq \sum_{i=1}^{n}i^{1/k}. \]
		This is easily verified for small $n$, giving a basis for induction. Assume that the result is true for digraphs with order $n-1$ and consider a $k$-geodetic digraph $G$ with order $n$ and $\ex(n;C_{k+1};k)$ directed $(k+1)$-cycles and, subject to this, the smallest possible size $m$.  In particular, every arc of $G$ is contained in a unique $C_{k+1}$, for otherwise deleting this arc would yield a $k$-geodetic digraph with the same number of $(k+1)$-cycles but smaller size. It follows that we can pair off the in- and out-neighbours of every vertex according to the corresponding $(k+1)$-cycles.  Hence $d^-(x) = d^+(x)$ for every vertex $x$ of $G$ and every vertex $x$ is contained in exactly $d^-(x) = d^+(x)$ directed $(k+1)$-cycles.  
		
		By Lemma~\ref{rootnoutdegree}, $G$ contains a vertex $x$ with out-degree $\leq n^{1/k}$.  Deleting this vertex, we obtain a $k$-geodetic digraph with order $n-1$ which, by induction, contains at most $\Sigma _{i=1}^{n-1}i^{1/k} $ copies of $C_{k+1}$.  Deleting $x$ destroyed at most $n^{1/k}$ $(k+1)$-cycles, so the result follows by induction.
	\end{proof}
	
	In fact the upper bound is tight up to a multiplicative constant. We can show this using the permutation digraphs $P(d,k)$ that were discussed in Section~\ref{section:largest size}. The permutation digraph $P(d,k)$ has order $n = (d+k)(d+k-1)\ldots (d+1)$ and size $dn$.  It is easily seen that each arc of $P(d,k)$ is contained in a unique $(k+1)$-cycle; for example $0123\dots (k-1) \rightarrow 123\dots (k-1)k$ is contained in the unique $(k+1)$-cycle
	\[ 0123\dots (k-1) \rightarrow 123\dots (k-1)k \rightarrow 23\dots (k-1)k0 \rightarrow  \dots \rightarrow k0123\dots (k-2) \rightarrow 0123\dots (k-1).\]
	Hence $P(d,k)$ contains $\frac{nd}{k+1}$ copies of $C_{k+1}$.  Therefore asymptotically $\ex(n;C_{k+1};k)$ is at least $\frac{1}{k+1}n^{\frac{k+1}{k}}$. In particular, $ex(n;C_3;2)$ must lie somewhere between $\frac{1}{3}n^{3/2}$ and $\frac{2}{3}n^{3/2}$. We show that the lower bound is correct.
	
	\begin{theorem} \label{thm:triangle}
		\[ \ex(n;C_3;2) = \frac{1}{3}n^{3/2} + O(n^{\frac{1}{2}}).\]
	\end{theorem}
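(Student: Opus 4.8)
The plan is to close the gap between the upper bound $\tfrac{2}{3}n^{3/2}$ inherited from the previous theorem and the permutation-digraph lower bound $\tfrac{1}{3}n^{3/2}$ by sharpening the upper bound so that it meets the lower bound in its leading term. The lower bound $ex(n;C_3;2) \geq \tfrac{1}{3}n^{3/2} + O(n^{1/2})$ is essentially already in hand, since $P(d,2)$ has $\tfrac{nd}{3}$ triangles with $n = (d+2)(d+1)$, and for general $n$ one deletes a bounded number of vertices from $P(d,2)$ with $d$ chosen as large as possible; so the real content is the matching upper bound. The two facts I would exploit are: each arc lies in at most one triangle, so the number $t_x$ of triangles through a vertex $x$ --- each consuming one out-arc and one in-arc at $x$ --- obeys $t_x \leq \min(d^+(x),d^-(x)) \leq \sqrt{d^+(x)d^-(x)}$; and a local Moore-type inequality forced by $2$-geodecity.

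For the latter, I would fix $x$ and observe that $2$-geodecity makes the sets $\{x\}$, $N^+(x)$ and $N^{+2}(x)$ pairwise disjoint, and forces the out-neighbourhoods $N^+(a)$, $a \in N^+(x)$, to be pairwise disjoint and to avoid $N^+(x)\cup\{x\}$: any overlap would give a second walk of length at most two starting at $x$. Consequently $N^{+2}(x) = \bigsqcup_{a \in N^+(x)} N^+(a)$, so that $|N^{+2}(x)| = \sum_{a \in N^+(x)} d^+(a)$ and
\[ 1 + d^+(x) + \sum_{a \in N^+(x)} d^+(a) \leq n \qquad \text{for every } x. \]

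Summing this over all $x$ and reindexing the inner sum by its summand gives the identity $\sum_x \sum_{a \in N^+(x)} d^+(a) = \sum_a d^+(a)d^-(a)$, whence $\sum_a d^+(a)d^-(a) \leq n(n-1) - m < n^2$. Feeding this into the degree bound on $t_x$ and applying Cauchy--Schwarz against the all-ones vector yields
\[ 3t = \sum_x t_x \leq \sum_x \sqrt{d^+(x)d^-(x)} \leq \Bigl(n\sum_x d^+(x)d^-(x)\Bigr)^{1/2} < (n \cdot n^2)^{1/2} = n^{3/2}, \]
so $t \leq \tfrac{1}{3}n^{3/2}$. Equivalently, one may first replace $G$ by a digraph of minimum size attaining the maximum number of triangles, in which every arc lies in exactly one triangle so that $d^+(x)=d^-(x)$ and $t_x = d^+(x)$; the same inequality then follows by applying Cauchy--Schwarz directly to $\sum_x d^+(x)$.

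I expect the routine counting above to deliver the leading constant $\tfrac{1}{3}$ without difficulty, and the main obstacle to be the precise error analysis: one must feed the near-extremal size $m \approx n^{3/2}$ back into the sharper estimate $\sqrt{n(n(n-1)-m)}$, and simultaneously control the loss incurred when the optimal $P(d,2)$ cannot be realised exactly for a given $n$, in order to certify that the upper and lower bounds agree to the stated order. Verifying that the small base cases behave and that these lower-order corrections cancel is where the care is needed.
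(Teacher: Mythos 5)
Your proposal is correct and takes essentially the same route as the paper: the paper also combines the fact that each arc lies in at most one directed triangle (passing to a minimum-size extremal digraph, where every arc lies in exactly one triangle, so $d^+(v)=d^-(v)$ and $N=\frac{1}{3}\sum_v d^+(v)$) with the disjointness bound $\sum_{u\in N^+(v)}d^+(u)=|N^{+2}(v)|\le n-1-d^+(v)$ and Cauchy--Schwarz, which is precisely the ``equivalent'' reduction you mention at the end. The only difference is bookkeeping: the paper retains the resulting $-3N$ term, obtaining $N^2+\frac{n}{3}N-n^2\frac{n-1}{9}\le 0$ and hence the explicit bound $N\le\bigl\lfloor\frac{n}{6}(\sqrt{4n-3}-1)\bigr\rfloor$, whereas you feed $\sum_x d^+(x)d^-(x)<n^2$ directly into Cauchy--Schwarz to get $N\le\frac{1}{3}n^{3/2}$, which is immaterial at the stated order of precision.
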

	\begin{proof}
		Let $G$ be a 2-geodetic digraph with order $n$ and $N: = \ex(n;C_3;2)$ directed triangles.  As before we can assume that every arc is contained in a unique triangle.  Thus $N = \frac{1}{3}\Sigma _{v \in G}d^+(v)$. For any vertex $v$ we have $\Sigma _{u \in N^+(v)}d^+(u) = |N^{+2}(v)| \leq n - 1- d^+(v)$.  By H\"{o}lder's inequality 
		\[ N = \frac{1}{3}\sum_{v \in G}d^+(v) \leq \frac{\sqrt{n}}{3}\sqrt{\sum_{v \in G} (d^+(v))^2}. \]
		
		In the sum $ \sum_{v \in G} \sum_{u \in N^+(v)} d^+(u)$ the term $d^+(u)$ appears $d^-(u) = d^+(u)$ times, so 
		
		\[ N \leq \frac{\sqrt{n}}{3}\sqrt{\sum_{v \in G} \sum_{u \in N^+(v)} d^+(u)} \leq  \frac{\sqrt{n}}{3}\sqrt{\sum_{v \in G} (n-1-d^+(v))} =  \frac{\sqrt{n}}{3}\sqrt{n^2-n-3N}. \]  
		Squaring both sides yields $N^2 \leq \frac{n}{9}(n^2-n-3N)$. 
		Rearranging and solving the associated quadratic equation, it follows that 
		\[ N \leq \left\lfloor \frac{n}{6}(\sqrt{4n-3}-1) \right\rfloor . \]
	\end{proof}
	
	\begin{remark}
		For infinitely many $n$, the upper bound in Theorem~\ref{thm:triangle} is at most $\frac{n}{3}$ off from the lower bound of the permutation digraph $P(d,2)$. 
	\end{remark}
	
	Based on this example, we make the following conjecture.
	
	\begin{conjecture}
		For all $k \geq 2$ we have
		\[ \ex(n;C_{k+1};k) = \frac{1}{k+1}n^{\frac{k+1}{k}} + O(n^{\frac{1}{k} }). \]
	\end{conjecture}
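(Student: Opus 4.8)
The plan is to push the argument of Theorem~\ref{thm:triangle} from $k=2$ to general $k$. First I would reduce to a structured extremal digraph: take a $k$-geodetic $G$ on $n$ vertices realising $N:=ex(n;C_{k+1};k)$ copies of $C_{k+1}$ and, among all such $G$, one of minimum size. Since by the lemma above every arc lies in at most one directed $(k+1)$-cycle, any arc lying in none could be deleted without destroying a cycle, contradicting minimality; hence every arc lies in exactly one $(k+1)$-cycle. Pairing each in-arc of a vertex with the out-arc that completes its cycle gives $d^+(v)=d^-(v)$ for all $v$, and counting the $k+1$ arcs of each cycle gives $N=\frac{1}{k+1}\sum_{v}d^+(v)$. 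Since the permutation digraphs already furnish $N\geq\frac{1}{k+1}n^{(k+1)/k}(1+o(1))$, it suffices to prove the matching upper bound $\sum_v d^+(v)\le (1+o(1))n^{(k+1)/k}$.

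The next step replaces the Cauchy--Schwarz step of the $k=2$ proof by Hölder's inequality with exponents $k$ and $k/(k-1)$:
\[ \sum_v d^+(v)\ \le\ n^{\frac{k-1}{k}}\Big(\sum_v (d^+(v))^k\Big)^{1/k}. \]
Thus it would be enough to show $\sum_v (d^+(v))^k\le (1+o(1))n^2$. The only handle $k$-geodecity gives on such sums is through walk counts: for any source $v$ the endpoints of the distinct length-$k$ walks leaving $v$ are pairwise distinct, lie in $N^{+k}(v)$, and are distinct from $v$, so the number of length-$k$ walks from $v$ is at most $|N^{+k}(v)|\le n-1$, and summing over $v$ the total number of length-$k$ walks is at most $n(n-1)<n^2$.

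The identity I would like is that this walk count equals $\sum_v (d^+(v))^k$. Writing $A$ for the adjacency matrix and $\mathbf d=(d^+(v))_v$, the balance $d^+=d^-$ gives $A\mathbf 1=\mathbf d$ and $\mathbf 1^{\mathsf T}A=\mathbf d^{\mathsf T}$, so the number of length-$k$ walks equals $\mathbf 1^{\mathsf T}A^k\mathbf 1=\mathbf d^{\mathsf T}A^{k-2}\mathbf d$. For $k=2$ this is exactly $\mathbf d^{\mathsf T}\mathbf d=\sum_v (d^+(v))^2$, which is why the argument closes and reproduces Theorem~\ref{thm:triangle}. More generally the identity $\#\{\text{length-}k\text{ walks}\}=\sum_v (d^+(v))^k$ holds whenever $G$ is out-regular --- precisely the case of the extremal permutation digraphs, where every length-$k$ walk from $v$ is counted by $(d^+(v))^k$.

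The main obstacle is exactly that this identity fails for irregular $G$ once $k\ge 3$: the quadratic form $\mathbf d^{\mathsf T}A^{k-2}\mathbf d$ is not $\sum_v (d^+(v))^k$, and a vertex of large out-degree whose out-neighbours have small out-degrees can inflate $\sum_v (d^+(v))^k$ while generating very few length-$k$ walks. Closing the gap therefore requires showing that the in/out-degree balance together with $k$-geodecity forces enough \emph{near-regularity} of the forward neighbourhoods to rule out such imbalances --- equivalently, bounding $\mathbf d^{\mathsf T}A^{k-2}\mathbf d$ below by $(1-o(1))\|\mathbf d\|_k^k$. I would attack this through an iterated Moore-type inequality $1+d^+(v)+|N^{+2}(v)|+\dots+|N^{+k}(v)|\le n$, aiming to show that the bulk of the degree mass lies on vertices whose successive out-neighbourhoods expand at the uniform rate $\approx n^{1/k}$; controlling the contribution of the remaining, badly expanding vertices is the delicate point that keeps the statement conjectural.
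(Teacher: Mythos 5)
You are attempting to prove what the paper itself states only as a conjecture: the paper establishes the lower bound $\frac{1}{k+1}n^{(k+1)/k}$ via the permutation digraphs $P(d,k)$ and the upper bound $\frac{k}{k+1}n^{(k+1)/k}+O(n^{1/k})$ by the inductive minimum-out-degree argument (Lemma~\ref{rootnoutdegree}), and closes the gap only for $k=2$ (Theorem~\ref{thm:triangle}); for $k\geq 3$ no proof exists in the paper. Your reduction is sound and matches the paper's machinery exactly: minimising the size among cycle-maximising digraphs forces every arc into exactly one $C_{k+1}$, whence $d^+(v)=d^-(v)$ and $N=\frac{1}{k+1}\sum_v d^+(v)$, and your H\"older step with exponents $k$ and $k/(k-1)$ is the natural generalisation of the Cauchy--Schwarz step in the paper's $k=2$ proof. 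Your walk-counting bound (at most $n-1$ walks of length $k$ from each vertex, by $k$-geodecity) is also correct.

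The genuine gap is the one you name, but it is worse than a missing regularity lemma: your sufficient condition $\sum_v (d^+(v))^k\leq(1+o(1))n^2$ is simply \emph{false} for balanced $k$-geodetic digraphs when $k\geq 3$, so it cannot be derived from balance plus geodecity alone. Concretely, take a ``flower'' of $d$ directed $(k+1)$-cycles sharing a single vertex $v$ and otherwise disjoint: this digraph is $k$-geodetic, has every arc in exactly one $(k+1)$-cycle, satisfies $d^+=d^-$ everywhere, has $n=dk+1$, and yet $\sum_u (d^+(u))^k\geq d^k\sim (n/k)^k\gg n^2$. Thus any proof along your lines must invoke near-extremality of the cycle count $N$ itself (the flower has only $\sim n/k$ cycles, far below $n^{(k+1)/k}/(k+1)$), i.e.\ show that digraphs with close to the maximum number of $(k+1)$-cycles are forced to be nearly out-regular --- which is precisely the open content of the conjecture, as you acknowledge. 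One further mismatch: even if your programme closed, H\"older with a $(1+o(1))$ factor would yield the main term $\frac{1}{k+1}n^{(k+1)/k}$ only up to multiplicative $1+o(1)$, whereas the conjecture asserts an additive error $O(n^{1/k})$; for $k=2$ the paper obtains this sharper form by keeping the exact inequality $\sum_{u\in N^+(v)}d^+(u)\leq n-1-d^+(v)$ and solving the resulting quadratic in $N$, a refinement your sketch would also need to retain.
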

	
	We turn now to the problem of the largest number of directed paths of given length in a $2$-geodetic digraph. Let $P_{\ell}$ be the path of length $\ell $ (i.e. order $\ell+1$). Surprisingly there are some differences between odd and even length paths; in the following theorem we show different lower bounds.
	
	\begin{theorem}\label{Thm:Lowerbound_paths}
		If $k \geq 2$ and $k$ divides $\ell$, then we have
		\[
		\ex(n;P_{\ell };k) = n^{(\ell /k)+1}+ O(n^{1+\frac{\ell-1}{k}}).
		\]
		In particular, for every even $l$
		\[ \ex(n;P_{\ell };2) =  n^{(\ell/2)+1}+O(n^{\ell/2}).\]
		If $\ell $ is odd, we have 
		\[
		\ex(n;P_{\ell };2) \geq (n/2)^{(\ell+3)/2}.
		\]
		
	\end{theorem}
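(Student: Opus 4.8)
The plan is to prove the three bounds by exhibiting explicit $k$-geodetic digraphs and counting directed copies of $P_l$ in them. The two regimes ($k\mid l$, and $k=2$ with $l$ odd) call for genuinely different constructions, and this is precisely the source of the parity phenomenon advertised just above the statement.

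For the general bound when $k\mid l$ (which specialises to the even case when $k=2$) I would use the permutation digraphs $P(d,k)$ from Section~\ref{section:largest size}. Recall that $P(d,k)$ is $k$-geodetic with out-degree $d$, and that its walks are encoded by strings over $[d+k]$ in which any $k+1$ consecutive symbols are distinct: a walk $v_0\to\cdots\to v_l$ corresponds to a string $s_0s_1\cdots s_{k+l-1}$ whose length-$k$ windows are the $v_i$. The key simplification is to count only those walks whose string uses $k+l$ \emph{pairwise distinct} symbols. Such a string is automatically a valid walk, and since distinct starting symbols force distinct windows, its $l+1$ vertices are automatically distinct, so it is a genuine $P_l$. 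For $d\ge l$ the number of such strings is the falling factorial $(d+k)(d+k-1)\cdots(d-l+1)=n\cdot d(d-1)\cdots(d-l+1)$, where $n=|V(P(d,k))|=\prod_{i=1}^k(d+i)$. Expanding this against $n^{1+l/k}=n\cdot n^{l/k}$ via $n=d^k\bigl(1+O(1/d)\bigr)$ shows both quantities equal $nd^l\bigl(1+O(1/d)\bigr)$, so the count is $n^{1+l/k}+O(n^{1+(l-1)/k})$. To cover all $n$, not merely the attained orders, I would take $d$ maximal with $\prod_{i=1}^k(d+i)\le n$ and pad with isolated vertices; since consecutive orders differ by $O(d^{k-1})=O(n^{(k-1)/k})$, replacing $|V(P(d,k))|$ by $n$ again perturbs the leading term by only $O(n^{1+(l-1)/k})$.

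For the odd case $k=2$, $l=2t+1$, the permutation digraph only yields the exponent $1+l/2=(l+2)/2$, so a strictly better construction is required. Here I would use the strongly-connected extremal $2$-geodetic digraph of Figure~\ref{fig:stronglyconnectedeven}: partite sets $A=\{a_1,\dots,a_r\}$, $B=\{b_1,\dots,b_r\}$ with $r=\lfloor n/2\rfloor$, all arcs oriented $a_i\to b_j$ for $i\ne j$, together with the reversed perfect matching $b_i\to a_i$. In this digraph every forward step $A\to B$ is an almost-free choice while every backward step $B\to A$ is \emph{forced} by the matching. Tracing a path $a_{i_0}\to b_{i_1}\to a_{i_1}\to b_{i_2}\to\cdots$ of length $2t+1$, one sees it is determined by an index sequence $(i_0,i_1,\dots,i_{t+1})$ in which $i_0,\dots,i_t$ are distinct and $i_1,\dots,i_{t+1}$ are distinct; there are $\bigl(1-o(1)\bigr)r^{t+2}$ such sequences, giving $\bigl(1-o(1)\bigr)(n/2)^{(l+3)/2}$ directed copies of $P_l$ and hence the stated bound to leading order.

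The delicate point is not the $2$-geodecity of either construction (which is inherited from Section~\ref{section:largest size}) but the bookkeeping that turns a count of walks into the advertised asymptotic with leading coefficient exactly $1$. The all-distinct-symbols device is what keeps this clean: it sidesteps any need to estimate how many length-$l$ walks of $P(d,k)$ repeat a vertex — each such walk would close up a directed cycle of length $\ge k+1$, which is awkward to bound directly — at the negligible cost of discarding $O(n^{1+(l-1)/k})$ potential paths. I expect the main obstacle to be controlling this discarded term and the order-gap term simultaneously, so that both are absorbed into $O(n^{1+(l-1)/k})$; in the odd case the analogous subtlety is verifying that the two local-distinctness constraints on the index sequence cost only a lower-order factor, so that the constant $(1/2)^{(l+3)/2}$ is attained in the limit.
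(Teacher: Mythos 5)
Your proposal is correct and follows essentially the same route as the paper: the paper also uses the permutation digraphs $P(d,k)$ for the $k \mid l$ case (counting, from each of the $n$ vertices, at least $d^k(d-1)\cdots(d-l+k) = d^l + O(d^{l-1})$ paths via the same avoid-used-symbols device you formalise with your all-distinct-strings count) and the oriented $K_{r,r}$ with a reversed perfect matching for odd $l$, $k=2$, counting $\bigl(\frac{n}{2}\bigr)^{(l+1)/2} + O(n^{(l-1)/2})$ paths from each of the $\frac{n}{2}$ high-out-degree vertices. Your two refinements --- padding with isolated vertices to cover orders $n$ not realised by $P(d,k)$, and noting that the odd-case count really carries a $(1-o(1))$ factor rather than the clean constant in the statement --- are careful touches the paper glosses over, but they do not change the argument.
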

	\begin{proof}
		Let $\ell $ be even and let $P(d,k)$ be a permutation digraph with degree $d$. $P(d,k)$ has order $(d+k)(d+k-1)\dots (d+1)$.  From each vertex $x$ there are at least $d^k(d-1)(d-2)\dots(d-\ell+k) = d^l+O(d^{\ell-1})$ distinct $\ell $-paths with initial vertex $x$, so there are $d^{\ell+k} + O(d^{\ell+k-1})$ distinct $\ell $-paths in $P(d,k)$.  Thus there are $n^{(\ell /k)+1} + O(n^{1+\frac{\ell-1}{k}})$ distinct $\ell$-paths in $P(d,k)$. For an upper bound, consider a path of length $\ell $ with vertices $0, 1, \dots , \ell $.  By $k$-geodecity, given the two endpoints of a path of length $k$, all of the intermediate vertices are determined.  Hence we can only choose vertices $0, k, 2k, \ldots, \ell $ independently.  Hence $\ex(n;P_{\ell };k)$ is at most $n^{(\ell /k)+1}$.
		
		Now let $\ell $ be odd and consider an orientation of the complete bipartite graph $K_{r,r}$ where $n = 2r$, in which a perfect matching is oriented in one direction and all other arcs are oriented in the opposite direction.  We have already seen that this digraph is 2-geodetic. The $\frac{n}{2}$ vertices of one partite set are the initial vertices of $(\frac{n}{2})^{(\ell +1)/2} + O(n^{(\ell-1)/2})$ distinct $\ell $-paths, whereas the vertices in the other partite set are the initial vertices of only $O(n^{(\ell-1)/2})$ $\ell$-paths each.  Multiplying by $\frac{n}{2}$ yields the result. 
	\end{proof}
	As for paths of odd length, we have an asymptotically sharp result only for $P_3$. 
	
	\begin{theorem}
		$\ex(n;P_3;2) = (n/2)^{3}+O(n^2)$
	\end{theorem}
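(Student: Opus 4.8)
The plan is to meet the lower bound already in hand with a matching asymptotic upper bound. The lower bound $ex(n;P_3;2)\ge (n/2)^3+O(n^2)$ is precisely the odd case $l=3$ of Theorem~\ref{Thm:Lowerbound_paths}: in the oriented $K_{r,r}$ (a perfect matching one way, all remaining arcs the other way) each of the $\tfrac n2$ vertices of one part begins $(\tfrac n2)^2+O(n)$ directed $3$-paths. So everything reduces to proving $ex(n;P_3;2)\le (n/2)^3+O(n^2)=\tfrac{n^3}{8}+O(n^2)$.

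First I would set up the count. The number of directed $P_3$'s in a $2$-geodetic digraph $G$ equals the number of length-$3$ walks minus those that repeat a vertex; a repeated vertex forces a cycle of length $\le 3$, and since each arc lies in at most one such cycle these degenerate walks number $O(n^2)$. Using that $2$-geodecity makes the length-$2$ walks out of $b$ end at distinct vertices, so $|N^{+2}(b)|=\sum_{c\in N^+(b)}d^+(c)$, we obtain, up to an additive $O(n^2)$,
\[ ex(n;P_3;2)=\sum_{b\to c} d^-(b)\,d^+(c)=\sum_b d^-(b)\,|N^{+2}(b)|. \]

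Next I would record the two structural consequences of $2$-geodecity that drive the estimate. (i) For every arc $b\to c$ a common out- or in-neighbour of $b$ and $c$ would produce two walks of length $\le 2$ between one ordered pair, so $N^+(b)\cap N^+(c)=\emptyset$ and $N^-(b)\cap N^-(c)=\emptyset$; likewise $|N^-(b)\cap N^+(c)|\le 1$, whence $d^-(b)+d^+(c)\le n+1$. (ii) For each vertex $b$ the out-neighbourhoods of its out-neighbours are pairwise disjoint, so $|N^{+2}(b)|=\sum_{c\in N^+(b)}d^+(c)\le n-1-d^+(b)$. Substituting (ii) and using $\sum_b d^-(b)=m\le \lfloor n^2/4\rfloor$ (Theorem~\ref{n^2/4}) already gives $ex(n;P_3;2)\le (n-1)m-N_2+O(n^2)\le \tfrac{n^3}{4}+O(n^2)$, where $N_2=\sum_b d^-(b)d^+(b)=O(n^2)$. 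This is exactly a factor of two too large.

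The hard part is saving that factor of two, and this is where the extremal structure must enter. The weak estimate pretends $|N^{+2}(b)|$ can be $\approx n$ for the high-in-degree vertices, whereas fact (ii) forces any vertex of large out-degree to send its arcs only into vertices of small out-degree (their out-degrees sum to $\le n$); this squeezes $G$ towards a bipartition into a ``high'' and a ``low'' out-degree class — the shape of the extremal oriented $K_{r,r}$, in which every second neighbourhood fills just one side and so has size $\approx n/2$. Concretely I would aim for the sharp inequality $\sum_b d^-(b)\,|N^{+2}(b)|\le \tfrac n2\,m+O(n^2)$, i.e.\ that the $d^-$-weighted average of $|N^{+2}(b)|$ is at most $n/2$; with $m\le n^2/4$ this yields the theorem. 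To prove it I would partition $V(G)$ by out-degree (dyadically, or at a threshold such as $\sqrt n$), bound the contribution of arcs between the classes by (ii) and of arcs within a class by the per-arc bound $d^-(b)+d^+(c)\le n+1$ of (i), and argue that equality can only be approached at the near-bipartite configuration; alternatively, once $G$ is shown to be dense, one may seek a stability version of the classification (Theorems~\ref{bipartiteisoptimal} and~\ref{completebipartitegraphs}) reducing to orientations of a balanced complete bipartite graph, where the $P_3$-count is evaluated directly. I expect this factor-of-two saving — equivalently, showing that near-extremal $2$-geodetic digraphs are essentially oriented balanced complete bipartite graphs — to be the only genuinely delicate step, since all purely degree-sequence inequalities I can see stop at $\tfrac{n^3}{4}$.
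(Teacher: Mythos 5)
Your setup is sound (the reduction from paths to walks, the identity $|N^{+2}(b)|=\sum_{c\in N^+(b)}d^+(c)$, facts (i) and (ii), and the resulting $\tfrac{n^3}{4}+O(n^2)$ bound are all correct), but the proof stops exactly where the theorem begins: the factor-of-two saving is announced, not proven. The proposed sharp inequality $\sum_b d^-(b)\,|N^{+2}(b)|\le \tfrac{n}{2}m+O(n^2)$ carries the entire difficulty, and your plan for it does not work as stated: on the extremal orientation of $K_{r,r}$ both this inequality and $m\le \lfloor n^2/4\rfloor$ are simultaneously tight (the weight $d^-\approx n/2$ sits precisely on the vertices with $|N^{+2}|\approx n/2$), so a crude dyadic or threshold partition by out-degree has no slack to exploit; any proof of the weighted inequality must already extract the bipartite-like structure, and ``argue that equality can only be approached at the near-bipartite configuration'' is the theorem restated, not an argument. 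The alternative route via ``a stability version'' of Theorems~\ref{bipartiteisoptimal} and~\ref{completebipartitegraphs} invokes a lemma the paper does not contain: those results classify digraphs of exactly maximal size, a digraph maximizing the number of $P_3$'s is not a priori of near-maximal size, and such a stability statement would itself need a proof plausibly as hard as the theorem.

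For contrast, the paper's upper bound avoids the weighted sum entirely and is quite concrete. It fixes $v_0$ of maximum out-degree $\Delta$ (with $\Delta$ at least the maximum in-degree, passing to the converse if needed), sets $N_0=N^+(v_0)$, and chooses $v_1$ maximizing $x=|N^+(v_1)\setminus N_0|$, $N_1=N^+(v_1)\setminus N_0$. By $2$-geodecity at most $2n$ arcs leave $N_0\cup N_1$, so---using your own observation that a $3$-path is determined by its first vertex plus last arc, or first arc plus last vertex---only $O(n^2)$ paths have first or third vertex in $N_0\cup N_1$ and may be discarded. The remaining paths number at most $(n-\Delta-x)^2\Delta+O(n^2)$, which is at most $\tfrac{n^3}{8}$ unless $\Delta+x<\tfrac{n}{2}$ and $\Delta>\tfrac{3-\sqrt{5}}{4}n$; in that remaining regime the total arc count is at most $2n+\Delta^2+(n-\Delta-x)x$ (maximality of $x$ bounds the arcs avoiding $N_0$), giving at most $(n-\Delta-x)\bigl(\Delta^2+(n-\Delta-x)x\bigr)+O(n^2)$ paths, and a one-variable optimization in $x$ followed by the substitution $\Delta=zn$ shows this stays below $\tfrac{n^3}{8}$ for $z\in\bigl[\tfrac{3-\sqrt{5}}{4},\tfrac12\bigr]$. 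So the missing step in your proposal is supplied not by a degree-sequence partition or a stability reduction but by this two-parameter extremal choice plus explicit case analysis; without an argument of that kind your proof is genuinely incomplete.
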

	\begin{proof}
		We have a lower bound from Theorem~\ref{Thm:Lowerbound_paths}. For an upper bound we denote the maximum out-degree by $\displaystyle\Delta:=\max_{v\in V(G)}\{d^+(v)\}$ and we assume without loss of generality that $\displaystyle\Delta\geq \max_{v\in V(G)}\{d^-(v)\}$. Let $v_0$ be a vertex with $d^+(v_0)=\Delta$ and denote the out-neighbourhood of $v_0$ by $N_0:=N^+_G(v_0)$. Let us assume that $v_1$ is a vertex for which $N_1:=N^+(v_1)- N_0$ is largest possible. 
		
		Note that by $2$-geodecity, for each fixed first vertex and last arc we have at most one path of length three; similarly we have at most one path of length three for each fixed first arc and last vertex.  We will use make of this several times in the following argument.
		
		There are at most $2n$ arcs starting from $N_0\cup N_1$ since $G$ is $2$-geodetic. Hence, by the observation of the previous paragraph, the number of 3-paths starting from $N_0$ or $N_1$ has quadratic order. 
		Similarly there are at most a quadratic number of paths of length three with third vertex lying in $N_0$ or $N_1$. Therefore since the desired upper bound for the number of 3-paths is cubic in order, we may ignore paths of length three which contain a vertex from $N_0$ or $N_1$ as the first or third vertex.
		
		Let us denote the number of vertices in $N_1$ by $x$. Since there are at most $(n-\Delta-x)$ choices for the first or the third vertex and at most $\Delta$ choices for the last vertex after fixing the third vertex, there are at most $(n-\Delta-x)^2\Delta + O(n^2)$ directed paths of length three. Using elementary calculus it is simple to check that we have
		\[
		(n-\Delta-x)^2\Delta\leq \frac{n^3}{8}
		\]
		if $\Delta+x \geq \frac{n}{2}$ or $\Delta\leq \frac{3-\sqrt{5}}{4}n$. Hence if $\Delta+x \geq \frac{n}{2}$ we have the desired inequality and we are done. 
		
		If $\Delta+x < \frac{n}{2}$ and $\Delta > \frac{3-\sqrt{5}}{4}n$, then we bound the number of paths of length three by a different function. The number of arcs in $G$ is at most $2n+\Delta^2+(n-\Delta-x)x$, where the $2n$ term bounds the number of arcs starting at $N_0\cup N_1$, the second term bounds the number of arcs entering $N_0$ and the third term bounds the number of arcs that are not incident from $N_0 \cup N_1$ or incident to $N_0$.  So by choosing the first vertex and the last arc and neglecting terms of order $O(n^2)$, the number of paths of length three is at most $f(x):=(n-\Delta-x)(\Delta^2+(n-\Delta-x)x)$. We have $f'(x)= 4 \Delta x - 2n \Delta + 3 x^2 - 4n x + n^2$ which is positive when $x=0$ and is negative when $x=\frac{n}{2}-\Delta$. Therefore the maximum of the function $f(x)$ in the interval $[0,\frac{n}{2}-\Delta]$ is attained at the smallest zero of $f'(x)$, $x_0= \frac{2n - 2\Delta - \sqrt{n^2 - 2n \Delta + 4 \Delta^2}}{3}$. Expanding and setting $\Delta = zn$ shows that the number of 3-paths minus $n^3/8$ is bounded above by
		
		\[ n^3\left(-\frac{11}{216}-\frac{2}{9}z+\frac{5}{9}z^2-\frac{11}{27}z^3+\frac{2}{27}(1-2z+4z^2)^{3/2}\right ).\]
		This function is negative in the interval $[\frac{3-\sqrt{5}}{4},\frac{1}{2}]$, completing the proof of the result. 
	\end{proof}
	
	\section*{Acknowledgements}
	The first author gratefully acknowledges funding support from EPSRC grant EP/W522338/1 and London Mathematical Society grant ECF-2021-27. The research of the third author is partially supported by the National Research, Development and Innovation Office NKFIH, grant K132696.


	\section*{Statements and Declarations}
	Work of the first author was supported by EPSRC grant EP/W522338/1 and London Mathematical Society grant ECF-2021-27. Work of the third author was partially supported by the National Research, Development and Innovation Office NKFIH, grant K132696.
	
	The authors have no relevant financial or non-financial interests to disclose.
	
	The computational datasets referred to in the article are available from the corresponding author on request.

\begin{thebibliography}{9}
		
		\bibitem{Afzaly} Afzaly, S.N., \emph{Generation of graph classes with efficient isomorph rejection.} PhD Thesis, Australian National University (2016).	
		\bibitem{AlonShik} Alon, N., Shikhelman, C., \emph{Many $T$ copies in $H$-free graphs.} J. Comb. Theory,
		Series B 121 (2016), 146-172.
		\bibitem{BerGerHeySot} Bermond, J.C., Germa, A., Heydemann, M.C., Sotteau, D., \emph{Girth in digraphs.} J. Graph theory, 4 (3) (1980), 337-341.
		\bibitem{BerTho} Bermond, J.C., Thomassen, C., \emph{Cycles in digraphs–a survey.} J. Graph Theory 5 (1) (1981), 1-43.
		\bibitem{BonMur} Bondy, J.A., Murty, U.S.R., \emph{Graph Theory with Applications.} (Vol. 290), 1976. London: Macmillan.
		\bibitem{BolGyo} Bollob\'{a}s, B., Gy\H{o}ri, E., \emph{Pentagons vs. triangles.} Discrete Math. 308 (2008), 4332–4336.
		\bibitem{BruFioFio} Brunat, J.M., Fiol, M.A., Fiol, M.L., \emph{Digraphs on permutations.} Discrete Math. 174 (1997), 73-86.
		\bibitem{Dir}  Dirac, G., \emph{Extensions of Tur\'{a}n’s theorem on graphs.} Acta Hung. Acad. Sci. 14 (1963), 417-422.
		\bibitem{Erd4} Erd\H{o}s, P., \emph{Extremal problems in graph theory, Theory of graphs and its applications.} Proc. Sympos. Smolenice, Publ. House Czech. Acad. Sci., Prague, (1964), 29-36.
		\bibitem{Erd3} Erd\H{o}s, P., \emph{On the number of complete subgraphs contained in certain graphs.} Magy. Tud. Akad. Mat. Kut. Intéz. Közl. 7 (1962), 459–474.
		\bibitem{Erd2} Erd\H{o}s, P., \emph{Some recent results on extremal problems in graph theory (Results).} Theory of Graphs (Internl. Symp. Rome) (1966), 118–123.
		\bibitem{Erd} Erd\H{o}s, P., \emph{Some recent progress on extremal problems in graph theory.} Congr. Numer 14 (1975), 3-14. 
		\bibitem{Fio} Fiol, M.L., \emph{The relation between digraphs and groups through Cayley digraphs.} Master Diss. Universitat Aut\'{o}noma de Barcelona (1984) (in Catalan).
		\bibitem{Grz} Grzesik, A., \emph{On the maximum number of five-cycles in a triangle-free graph.} Jour. Comb. Theory B 102 (2012), 1061–1066.
		\bibitem{GyoLi} Gy\H{o}ri, E., Li, H., \emph{The maximum number of triangles in $C_{2k+1}$-free graphs.} Combin. Probab. Comput.
		21 (1–2) (2012), 187–191.
		\bibitem{HatHlaKraNorRaz} Hatami, H., Hladk\'{y}, J., Kr\'{a}l’, D., Norine, S., Razborov, A., \emph{On the number of pentagons in triangle-free graphs.} J. Comb. Theory A 120 (3) (2013), 722–732.
		\bibitem{HuaLyu} Huang, Z., Lyu, Z., \emph{Extremal digraphs avoiding distinct walks of length 3 with the same endpoints.} arXiv preprint arXiv:2104.08498 (2021).
		\bibitem{HuaLyuQiao} Huang, Z., Lyu, Z., Qiao, P., \emph{A Tur\'{a}n problem on digraphs avoiding distinct walks of a given length with the same endpoints.} Discrete Math. 342 (6) (2019), 1703-1717.
		\bibitem{LazUst} Lazebnik, F., Ustimenko, V.A., \emph{Explicit construction of graphs with an arbitrary large girth and of large size.} Discrete Appl. Math. 60 (1-3) (1995), 275-284.
		\bibitem{Lyu} Lyu, Z. \emph{Extremal digraphs avoiding distinct walks of length 4 with the same endpoints.} Discuss. Math. Graph T. https://doi. org/10.7151/dmgt (2020), 2321.
		\bibitem{Man} Mantel, W., \emph{Problem 28.} Wiskundige Opgaven 10, no. 60-61 (1907), 320.
		\bibitem{MilSir} Miller, M., Sir\'{a}n, J., \emph{Moore graphs and beyond: A survey of the degree/diameter problem.} Electron. J. Comb. (2012), DS14.
		\bibitem{ShaUst} Shaska, T., Ustimenko, V.A., \emph{On some applications of graphs to cryptography and turbocoding.} Albanian J. Math. 2 (3) (2008), 249-255.
		\bibitem{Sim} Simonovits M., \emph{A method for solving extremal problems in graph theory, stability problems.} Theory of Graphs (Proc. Colloq. Tihany, 1966), Academic Press, New York (1968), 279–319.
		\bibitem{Tui} Tuite, J., \emph{On diregular digraphs with degree two and excess two.} Discrete Appl. Math. 238 (2018), 167-174.
		\bibitem{Tur} Tur\'{a}n, P., \emph{An extremal problem in graph theory.} Mat. Fiz. Lapok 48 (1941), 436-452.
		\bibitem{UstKoz} Ustimenko, V.A., Kozicki, J., \emph{On extremal directed graphs with large hooves.} Topics in Graph Theory, (2013), 26-35.
		\bibitem{Wu} Wu, H., \emph{On the 0-1 matrices whose squares are 0-1 matrices.} Linear Algebra Appl. 432 (11) (2010), 2909-2924.
		
	\end{thebibliography}
\end{document}